\newtheorem{theorem}{Theorem}[section]
\newtheorem{proposition}[theorem]{Proposition}
\newtheorem{corollary}[theorem]{Corollary}
\newtheorem{lemma}[theorem]{Lemma}
\newtheorem{claim}[theorem]{Claim}
\newtheorem{rmk}[theorem]{Remark}
\newtheorem{question}[theorem]{Question}
\newtheorem*{theorem*}{Theorem}
\newtheorem*{corollary*}{Corollary}
\newtheorem{definition}[theorem]{Definition}
\newcommand{\T}{\mathcal{T}}
\newcommand{\F}{\mathcal{F}}
\newcommand{\A}{\mathbb{A}}
\newcommand \hyp {\operatorname{Exp}}
\renewcommand{\hyp}{\operatorname{Exp}}
\providecommand{\keywords}[1]
{
	\small	
	\textbf{\textit{Keywords:}} #1
}
\providecommand{\subjclass}[1]
{
	\small	
	\textbf{\textit{MSC2020:}} #1
}
\title{AUTOHOMEOMORPHISMS OF THE FINITE POWERS OF THE DOUBLE ARROW.}
\author{ Sebasti\'an Barr\'ia, Carlos Martinez-Ranero \footnote{Universidad de Concepci\'on, Chile (all authors).} }
\begin{document}
	\maketitle
	
	\begin{abstract}
		Let $\mathbb{A}$ and $\mathbb{S}$ denote the double arrow of Alexandroff and the Sorgenfrey line, respectively.
		We show that   any homeomorphism $h:^m\mathbb{A}\to^m\mathbb{A} $ is locally (outside of a nowhere dense set) a product of monotone embeddings $h_i:J_i\subseteq \mathbb{A}\to\mathbb{A} (i\in m)$ followed by a permutation of the coordinates.\\
		We also prove that the symmetric products  $\mathcal{F}_m(\mathbb{A})$ are not homogeneous for any $m\geq 2$.  This partially solves an open question of A. Arhangel'ski\v{i} \cite{ar87}. In contrast, we show that symmetric product $\mathcal{F}_2(\mathbb{S})$ is homogeneous. 
		\footnote{The first named author was partially supported by CONICYT-PFCHA Doctorado Nacional 2017-21170738. The  second named author was partially supported by  Proyecto VRID-Investigación  No. 220.015.024-INV }
	\end{abstract}
	
	\keywords{Double arrow, hyperspaces, homogeneous spaces, Sorgenfrey.}
	
	\subjclass{ 54B20, 54B10, 54F05} 
	\
	\tableofcontents
	
	\section{INTRODUCTION}
	A space $X$ is \textit{homogeneous} if for every $x,y\in X$ there exists a autohomeomorphism $h$ of $X$ such that $h(x)=y$. Several classic results on homogeneity involve the study of the hyperspace $Exp(X)$ (set of closed subsets  of $X$) in the Vietoris topology. 
	In this paper we are motivated by the following general question.
	\begin{question}
		When is $\hyp(X)$ homogeneous?
	\end{question}
	
	In the seventies, it was shown by Shori and West \cite{sw} that $\hyp([0,1])$ is homeomorphic to the Hilbert's cube. In particular, is possible that the $\hyp(X)$ is homogeneous while $X$ is not. On the other hand, if  $\kappa>\aleph_1$, then $\hyp{(2^{\kappa})}$ is not homogeneous (see \cite{sce}). Thus, the question of homogeneity of the hyperspace turns out to be quite subtle. 
	
	Arhangel'ski\v{i} in \cite{ar87}  asked the following question (it also appears in \cite{avm}).
	\begin{question}\label{quesarhan}
		Is the hyperspace  $\hyp{(\A)}$  homogeneous?
	\end{question} 
	
	In this paper we partially answer Question \ref{quesarhan}, by showing that.
	
	\begin{theorem}\label{main2}
		The symmetric product  $\mathcal{F}_m(\mathbb{A})$ is not homogeneous for any $m\geq 2$. 
	\end{theorem}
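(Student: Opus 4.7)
The plan is to identify $\F_m(\A)$ with the sorted simplex $\Delta_m := \{(x_1,\dots,x_m)\in\A^m : x_1\le\cdots\le x_m\}$ via the map sending a finite set to its increasing enumeration, which is a homeomorphism since it is a continuous bijection between compact Hausdorff spaces. Under this identification, a singleton $\{a\}$ corresponds to the ``most degenerate'' diagonal point $p_0 := (a,\dots,a)$, while a generic $m$-element set $\{b_1,\dots,b_m\}$ with $b_1<\cdots<b_m$ corresponds to a strictly-sorted tuple $p_1 := (b_1,\dots,b_m)$. My strategy is to exhibit a topological property that distinguishes these two kinds of points: namely, \emph{$p$ admits an open neighborhood in $\Delta_m$ homeomorphic to an open subset of $\A^m$}. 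The generic $p_1$ satisfies the property trivially: choosing pairwise disjoint basic clopen neighborhoods $V_i$ of the $b_i$'s, the product $V_1\times\cdots\times V_m$ is simultaneously open in $\A^m$ and an open $\Delta_m$-neighborhood of $p_1$. The whole content of the theorem then reduces to the following:

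\textbf{Key Claim.} For any basic clopen neighborhood $W$ of $a\in\A$ and any $m\ge 2$, the small simplex $\Delta_m(W) := W^m\cap\Delta_m$ is not homeomorphic to any open subset of $\A^m$.

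Since every neighborhood of $p_0$ in $\Delta_m$ contains one of the form $\Delta_m(W)$, the Key Claim implies that $p_0$ fails the topological property, so no self-homeomorphism of $\Delta_m$ can send $p_0$ to $p_1$, proving non-homogeneity.

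To attack the Key Claim I would use the main structure theorem as follows. Suppose for contradiction that $\phi:\Delta_m(W)\to U$ is a homeomorphism onto an open subset $U\subseteq\A^m$. Compose with the quotient $q:W^m\to W^m/S_m\cong\Delta_m(W)$ to obtain $F:=\phi\circ q:W^m\to U$, an $S_m$-invariant continuous surjection between two open subsets of $\A^m$ that is $m!$-to-$1$ off the fat diagonal and $1$-to-$1$ only at the single point $(a,\dots,a)$ lying over $\phi(p_0)$. Next, transport through $\phi$ the natural ``diagonal'' action of the group $G$ of monotone-increasing autohomeomorphisms of $\A$ fixing $a$, acting on $\Delta_m(W)$ by $h\cdot(x_1,\dots,x_m)=(h(x_1),\dots,h(x_m))$. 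Because any point of $\A$ other than $a$ can be moved by some element of $G$, the common fixed set of this diagonal action on $\Delta_m(W)$ is exactly $\{p_0\}$, so the transferred action on $U$ has $\{\phi(p_0)\}$ as its sole common fixed point. The main structure theorem forces each element of the transferred $G$-action to be, off a nowhere dense set, a product of monotone embeddings followed by a coordinate permutation of $U$. A careful analysis of the possible orbit and fixed-point structures of such rigid group actions on open subsets of $\A^m$, combined with the isolated critical fiber structure of $F$ above $\phi(p_0)$, should yield a contradiction.

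The main obstacle will be the final combinatorial analysis: showing that the rigid product-plus-permutation form forced by the main structure theorem is incompatible with having simultaneously (i) a single common fixed point for the large group $G$, and (ii) the isolated critical fiber of $F$ above $\phi(p_0)$. The leverage of the structure theorem here is essential, since it is the only tool that turns the a priori very flexible class of self-homeomorphisms of zero-dimensional compact non-metrizable spaces into a rigid class amenable to a combinatorial obstruction argument.
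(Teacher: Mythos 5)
Your reduction is sound in outline and close in spirit to the paper's: the paper likewise first argues (via a self-similarity argument) that homogeneity of $\mathcal{F}_m(\mathbb{A})$ would force $\Delta_m/\sim$ to be homeomorphic to $^m\mathbb{A}$, and then derives a contradiction; your local version (a generic point has a product neighborhood, the diagonal point does not) is a legitimate variant. But two problems remain. First, a small one: your identification of $\mathcal{F}_m(\mathbb{A})$ with the sorted simplex $\Delta_m$ via ``increasing enumeration'' is not a bijection for $m\geq 3$ --- the tuples $(a,a,b)$ and $(a,b,b)$ represent the same set, so the correct model is the quotient $\Delta_m/\sim$ (this is Ganea's theorem, which the paper cites). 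This is repairable but changes what your Key Claim must say near the diagonal.

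Second, and decisively: the Key Claim is the entire content of the theorem, and you do not prove it. Your sketch ends by deferring ``a careful analysis of the possible orbit and fixed-point structures'' and asserting it ``should yield a contradiction.'' As written, the proposed machinery is also doubtful: Theorem \ref{main1} controls each autohomeomorphism only off a nowhere dense set \emph{depending on that homeomorphism}, so a simultaneous fixed-point analysis over the uncountable group $G$ of increasing autohomeomorphisms has no uniformity to exploit, and the transferred maps are self-maps of an arbitrary open $U\subseteq{}^m\mathbb{A}$, not of $^m\mathbb{A}$ itself. The paper's actual mechanism is different and more elementary: the diagonal $\Gamma=\{[\overline{x}]\}$ is a copy of $\mathbb{A}$ that is \emph{not} $G_\delta$ in $\Delta_m/\sim$, whereas Lemma \ref{gdlema} shows that any embedded copy of $\mathbb{A}$ in $^m\mathbb{A}$ with $G_\delta$ image is, densely often, parallel to a coordinate axis; hence the image of $\Gamma$ under a putative homeomorphism must have two coordinates strictly monotone on some clopen interval, and the two-to-one fibers of $\pi:\mathbb{A}\to[0,1]$ together with a counting/accumulation-point argument then produce the contradiction. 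Some such concrete invariant (the $G_\delta$ status of the diagonal, or an equivalent) is the missing idea; without it your proposal does not close.
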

	
	Where the symmetric product $\mathcal{F}_m(\mathbb{A})$ is the subspace of $\hyp(\mathbb{A})$ consisting of all finite, non-empty, subsets of cardinality at most $m$. 
	
	The following result can be seen as a companion of the previous Theorem. 
	
	\begin{theorem}\label{main3}
		The symmetric product $\mathcal{F}_2(\mathbb{S})$ is homeomorphic to $\mathbb{S}^2$. In particular, it is homogeneous.
	\end{theorem}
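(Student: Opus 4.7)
The plan is to build the homeomorphism in two stages. First, I would identify $\mathcal{F}_2(\mathbb{S})$ with the closed upper triangle $T := \{(x,y) \in \mathbb{S}^2 : x \le y\}$ inside the Sorgenfrey plane, via the $(\min, \max)$-map
\[
\Phi : \mathcal{F}_2(\mathbb{S}) \to T, \qquad \Phi(\{x,y\}) = (\min\{x,y\},\max\{x,y\}),
\]
which is available because $\mathbb{S}$ is linearly ordered. A routine inspection of basic opens shows $\Phi$ is a homeomorphism onto $T$ in its subspace topology: for a pair $\{x,y\}$ with $x < y$, the Vietoris open $\langle [x,x{+}\varepsilon),[y,y{+}\delta)\rangle$ (with $\varepsilon \le y - x$) corresponds to the Sorgenfrey rectangle $[x,x{+}\varepsilon) \times [y,y{+}\delta) \subset T$, while for a singleton $\{x\}$ the Vietoris open $\langle [x,x{+}\varepsilon)\rangle$ corresponds to the ``Sorgenfrey triangle'' $[x,x{+}\varepsilon)^2 \cap T$.

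Second, I would prove $T \cong \mathbb{S}^2$ by exploiting the self-similarity of $\mathbb{S}$. The clopen partition $\mathbb{S} = \bigsqcup_{n \in \mathbb{Z}} [n, n{+}1)$ induces clopen partitions of both $\mathbb{S}^2$ and $T$ into countably many pieces. In $\mathbb{S}^2$ every piece is a Sorgenfrey square homeomorphic to $\mathbb{S}^2$; in $T$ the off-diagonal pieces $[n,n{+}1) \times [m,m{+}1)$ (with $n < m$) are squares, while the diagonal pieces $\tau_n := \{(x,y) \in [n,n{+}1)^2 : x \le y\}$ are Sorgenfrey triangles, each affinely equivalent to the unit triangle $\tau := \{(x,y) \in [0,1)^2 : x \le y\}$. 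Since $\mathbb{S}^2$ is itself a countable disjoint union of Sorgenfrey squares, the whole theorem reduces to the key lemma $\tau \cong \mathbb{S}^2$; the final \emph{``in particular''} then follows from translation-homogeneity of $\mathbb{S}^2$.

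The main obstacle is the key lemma, because diagonal points of $\tau$ have neighborhood bases of shrinking Sorgenfrey triangles, rather than the rectangular bases one sees at every point of $\mathbb{S}^2$. My starting point would be the three-piece clopen decomposition
\[
\tau = \tau_L \sqcup R \sqcup \tau_R,
\]
where $\tau_L, \tau_R \cong \tau$ are smaller triangles in $[0,1/2)^2$ and $[1/2,1)^2$ and $R = [0,1/2) \times [1/2,1)$ is a Sorgenfrey square; this gives $\tau \cong 2\tau \sqcup \mathbb{S}^2$. Iterating the decomposition dyadically, after $n$ stages $\tau$ splits into $2^n$ clopen sub-triangles of side $2^{-n}$ together with $2^n - 1$ clopen rectangles, and as $n \to \infty$ the rectangles accumulate into a countable clopen collection exhausting $\tau \setminus \Delta$, where $\Delta = \{(x,x) : 0 \le x < 1\}$ is the diagonal. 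I would then run the parallel dyadic decomposition of $[0,1)^2$ and perform a back-and-forth matching of clopen rectangles between the two spaces at compatible dyadic scales, arranging also that $\Delta$ is sent bijectively onto the diagonal of $[0,1)^2$. The delicate point is to verify continuity at diagonal points: the matching must be chosen so that the shrinking triangular neighborhoods at a point of $\Delta$ in $\tau$ correspond to shrinking square neighborhoods at the image point in $[0,1)^2$, which is precisely where the self-similar equation $\tau \cong 2\tau \sqcup \mathbb{S}^2$ and the analogous identity $\mathbb{S}^2 \cong 4\,\mathbb{S}^2$ must be used to set up the matching in a scale-coherent way.
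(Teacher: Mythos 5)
Your first stage (the $(\min,\max)$ identification of $\mathcal{F}_2(\mathbb{S})$ with the closed upper triangle) and your reduction to the key lemma $\tau\cong\mathbb{S}^2$ are correct and match the paper, which works directly with $\Delta_2=\{(x,y):x\le y\}$ and proves $\Delta_2\cong\mathbb{S}^2$. Your dyadic partition of $\tau\setminus\Delta$ into clopen Sorgenfrey rectangles is also a legitimate substitute for the partition $\mathcal{T}$ that the paper imports from Bennett--Burke--Lutzer (Proposition \ref{rec}): the rectangle $[c-2^{-m},c)\times[c,c+2^{-m})$ containing $(x_n,y_n)$ shrinks to a point whenever $(x_n,y_n)\to(x,x)$, which is the analogue of clause (4) there, and the fact that its Euclidean closure touches $\Delta$ at the corner $(c,c)$ is harmless. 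The gap is in the step you label as delicate and then do not carry out: the ``back-and-forth matching of clopen rectangles at compatible dyadic scales.'' A bijection between \emph{whole} rectangles of $\tau\setminus\Delta$ and whole rectangles of $[0,1)^2\setminus\Delta$ cannot be made position-preserving: near each diagonal point $(u,u)$ of the square, rectangles accumulate from both sides of the diagonal, whereas in $\tau$ they accumulate at $(x,x)$ only from the side $\{p\le q\}$ --- loosely, there are twice as many rectangles to be filled on the square side as are available in the triangle near any given diagonal point. On the other hand, any matching that relocates a rectangle away from the diagonal point at which it clusters destroys continuity there, since arbitrarily small triangular neighborhoods $[x,x+\delta)^2\cap\tau$ still meet that rectangle. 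So ``scale-coherent'' cannot be achieved by permuting whole rectangles; some subdivision is forced, and your proposal does not say which.

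The device that resolves this --- and which is the actual mathematical content of the paper's proof --- is to cut each rectangle $R=[a,b)\times[c,d)$ of the triangle into its lower half $R^L$ and upper half $R^U$, stretch $R^L$ affinely onto $R$, reflect $R^U$ affinely onto $R^S=[c,d)\times[a,b)$, and fix the diagonal pointwise; one then checks continuity of this bijection and of its inverse at diagonal points using the shrinking property of the partition (on the inverse side, via the reflected lines $\tilde L_k$). Your plan contains all the surrounding scaffolding --- the identification with the triangle, the self-similar clopen decomposition, the correct localization of the difficulty --- but not this (or any equivalent) explicit mechanism, and since everything else in the argument is routine, this is where the proof actually lives. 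If you graft the half-and-reflect step onto your dyadic partition, the argument goes through and becomes essentially the paper's proof, with an explicit partition in place of the cited one.
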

	
	In the course of proving Theorem \ref{main2}, we study the group of autohomeomorphisms of $^m\mathbb{A}$, and obtain the following Theorem which give us a complete picture on the structure of such autohomeomorphisms.   
	
	\begin{theorem}\label{main1}
		Let $h:^m\mathbb{A}\to ^m\mathbb{A}$ be a homeomorphism. Then there is  a pairwise disjoint sequence of basic clopen boxes $U_n:=\prod\limits_{j\in m}I^j_n (n\in \omega) $ such that $\bigcup U_n$ is dense in $^m\mathbb{A}$ and $h\restriction U_n= \sigma\circ h^0\times\cdots\times h^{m-1}, $ where each $h^j: I_n^j\to \mathbb{A}$ is an strictly monotone homeomorphism onto a clopen interval, and $\sigma$ is a permutation of $^m\mathbb{A}$.
	\end{theorem}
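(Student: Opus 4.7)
My plan is to reduce the statement to a local claim and then extract the pairwise disjoint family by standard zero-dimensional techniques. Specifically, I will show there is a dense open set $W\subseteq{}^m\A$ with the property that every $x\in W$ admits a clopen box neighborhood $U\ni x$ on which $h\restriction U=\sigma_U\circ(h^0_U\times\cdots\times h^{m-1}_U)$ for some permutation $\sigma_U$ and strictly monotone homeomorphisms $h^j_U$ onto clopen intervals. Granted this local statement, the family $\{U_n\}_{n\in\omega}$ is produced by a routine maximality argument: since ${}^m\A$ is separable, compact and zero-dimensional, it has a countable base of clopen boxes, and a maximal antichain (under inclusion) of witnessing boxes drawn from this base is automatically countable and has dense union inside $W$, hence inside ${}^m\A$.

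For the local claim, I fix a generic $x=(x^0,\ldots,x^{m-1})$ (none of whose coordinates is an endpoint of $\A$), and pick small clopen box neighborhoods $V=\prod_j J^j\ni h(x)$ and $U=\prod_j I^j\ni x$ with $h(U)\subseteq V$. I then study the $m$ coordinate lines through $x$ in $U$, namely $L_j:=\{y\in U:y^i=x^i\text{ for all }i\neq j\}$, each a clopen arc homeomorphic to $\A$. The key technical step is a rigidity claim: after possibly shrinking $U$ and $V$, there exists a permutation $\sigma$ of $\{0,\ldots,m-1\}$ such that for every $j$, $h(L_j)$ equals the coordinate line $\{z\in h(U):z^i=h(x)^i\text{ for all }i\neq\sigma(j)\}$. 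Once this is in hand, the restriction $h\restriction L_j$ is a homeomorphism between clopen arcs of $\A$ and therefore strictly monotone, invoking the description of $\aut(\A)$ as strictly monotone homeomorphisms (presumably established in an earlier section). The monotone arc maps assemble into a product, and composing with $\sigma$ recovers $h\restriction U$, since the values of $h$ on the whole box $U$ are determined by its values on the $m$ coordinate lines together with continuity and the product topology.

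The principal obstacle is proving the rigidity claim; the rest of the argument is essentially bookkeeping. The claim cannot hold globally -- a diagonal embedding of $\A$ into $\A^2$ witnesses a non-coordinate arc through a generic point -- so the refinement of $U$ and $V$ is essential. I expect the argument to topologically characterize coordinate lines inside small clopen boxes by the property that their projections onto the remaining $m-1$ coordinates are constant, and to transport this characterization across $h$ by interacting the clopen box filtrations at $x$ and $h(x)$. The one-sided clopen neighborhood bases at each non-endpoint point of $\A$ will play a central role, since they severely restrict the possible continuous non-constant $\A$-arcs sitting inside ${}^m\A$ on a sufficiently small scale; in particular they should force the appropriate projections of $h(L_j)$ to be eventually constant after passing to a small enough sub-box. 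Once $\sigma$ is pinned down locally, its constancy on the clopen box $U$ is automatic from continuity of $h$, and the density of $W$ follows from the openness of the local property combined with the density of generic-type points in ${}^m\A$.
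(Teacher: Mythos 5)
The overall skeleton of your argument (reduce to a dense local statement, then extract a countable maximal antichain of clopen boxes) matches the paper's opening reduction, but the heart of your proof --- the ``rigidity claim'' that $h$ carries coordinate lines to coordinate lines after shrinking --- is exactly the hard part, and the mechanism you sketch for it does not work. Being ``constant in the remaining $m-1$ projections'' is not a topological property of an arc, so it cannot simply be ``transported across $h$''; and shrinking the box does not help against your own counterexample, since the diagonal copy of $\mathbb{A}$ in $\mathbb{A}^2$ meets every small clopen box around a diagonal point and is non-constant in both coordinates on every subinterval. What actually rules out a diagonal-like image is an invariant you never identify: each coordinate line $E_{a,i}$ is $G_\delta$ in $^m\mathbb{A}$ (points of $\mathbb{A}$ are $G_\delta$), hence so is its image under the autohomeomorphism $h$, whereas the paper's Lemma~\ref{gdlema} shows that an embedded copy of $\mathbb{A}$ whose image is $G_\delta$ must, on a dense family of clopen intervals, be strictly monotone in one coordinate and constant in the others. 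The proof of that lemma is the key combinatorial step (if two coordinates are injective, one exhibits two points differing only in the two halves $\langle a,0\rangle,\langle a,1\rangle$ of a split point, both lying in every open set containing the image but only one in the image itself, contradicting $G_\delta$-ness). The diagonal survives your small-scale/one-sided-base heuristic precisely because it is closed but not $G_\delta$.

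There is a second gap in the assembly step. Knowing that $h$ maps the $m$ coordinate lines through the single point $x$ into coordinate lines does not determine $h$ on the box $U$, nor does it yield the product form: a map such as $(y^0,y^1)\mapsto(h^0(y^0),g_{y^0}(y^1))$ agrees with a product map on the two lines through $x$ without being one. You need the line-to-line statement uniformly for \emph{all} lines $E_{a,i}$ with $a$ ranging over the projected box, with a \emph{single} target coordinate $\sigma(i)$ and a \emph{single} good interval independent of $a$. The paper obtains this by applying Lemma~\ref{gdlema} to each $h\circ e_{a,i}$, using a Baire category argument to stabilize the good interval $[\langle q,1\rangle,\langle r,0\rangle]$ and the index $j_0$ on a dense set of parameters $a$, and then a limit/continuity argument to extend to all $a$ in a clopen box; injectivity of $\sigma$ then comes from a separate intersection argument with pairs of transversal lines. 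Your proposal as written contains neither the $G_\delta$ idea nor the uniformization over $a$, so it is not yet a proof.
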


	The paper is organized as follows. In Section 2 we study the autohomeomorphisms of $^m\A$ and give a proof of Theorem \ref{main1}. In Section 3 we give a proof of Theorem \ref{main2} and in Section 4 a proof of Theorem \ref{main3}.  The notation and terminology in this paper is fairly standard.  We will use \cite{en89} as a basic reference on topology and \cite{avm} as a reference for  homogeneity and hyperspaces.

	\section{AUTOHOMEOMORPHISMS OF $^m\mathbb{A}$}
	The purpose of this section it to  prove Theorem \ref{main1}. It will be convenient to introduce some notation.
	Let $\mathbb{A}_0=]0,1]\times \{0\}, \mathbb{A}_1=[0,1[\times \{1\}$ and $\mathbb{A}=\mathbb{A}_0\cup \mathbb{A}_1$. Define the lexicographical ordering $\langle a,r\rangle \prec \langle b, s\rangle$ if $a<b$ or $a=b $ and $r<s$. The set $\A$ with the order topology is the \textit{double arrow space}.
	
	The  Let $\pi: \mathbb{A}\to [0,1]$ be the projection onto the first factor $\pi(\langle x,r\rangle )=x$, we will think of an element of the finite power $x\in ^m\mathbb{A}$ as function $x:m\to \mathbb{A}$. For any $a\in \mathbb{A}$ we will denote by $\overline{a}$ the constant sequence $a$ of arbitrary finite length.    Let $\pi_i:^m\mathbb{A}\to \mathbb{A}$ be the projection onto the $i$-coordinate, and let $h_i=\pi_i\circ h$. 
	Recall that a partial function $f:\mathbb{A}\to \mathbb{A}$ is monotone if it is either non-decreasing or non-increasing, and  $f$ is strictly monotone if it is either strictly increasing or strictly decreasing.

	We now recall the following result from R. Hernández-Gutiérrez.
	\begin{proposition}[\cite{hg} Proposition 3.1]\label{basicprop} 
		Let $h:\mathbb{A}\to\mathbb{A}$ be a continuous function, then there exists a pairwise disjoint  sequence $J_n (n\in\omega)$ of clopen intervals such that $\bigcup_{n\in\omega}J_n$ is dense in $\mathbb{A}$, and $h\restriction J_n$ is monotone, for any $n\in\omega.$
	\end{proposition}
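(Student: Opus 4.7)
My strategy splits into a density claim and a disjointification. Let $D$ be the union of all clopen intervals $J \subseteq \mathbb{A}$ on which $h\restriction J$ is monotone; being a union of open sets, $D$ is open. If I can show that $D$ is dense in $\mathbb{A}$, the conclusion follows without much effort: $D$ is hereditarily Lindel\"of, so it is covered by a countable family $\{K_n\}_{n\in\omega}$ of monotone clopen intervals. Setting $L_n = K_n \setminus \bigcup_{k<n} K_k$ produces a pairwise disjoint sequence of clopen sets with the same union $D$; and since every clopen subset of $\mathbb{A}$ decomposes as a finite disjoint union of clopen intervals, each $L_n$ breaks into clopen subintervals of $K_n$, on which $h$ is automatically monotone by restriction. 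Re-indexing yields the desired sequence $J_n$.

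The real content is the density of $D$. I would argue by contradiction, supposing some nonempty clopen interval $V$ is disjoint from $D$. Then on every clopen subinterval $K \subseteq V$, $h\restriction K$ is non-monotone; in particular, since constant functions are monotone in the paper's convention, $h$ is also non-constant on every clopen subinterval of $V$. The plan is to produce a point $x^* \in V$ at which $h$ fails to be continuous. I would build nested clopen intervals $V \supseteq V_0 \supseteq V_1 \supseteq \cdots$ together with triples $a_n \prec b_n \prec c_n$ in $V_n$ witnessing non-monotonicity (for instance, $h(b_n)$ strictly above both $h(a_n)$ and $h(c_n)$), orienting the construction so that the nested intersection converges from the admissible side to a single point $x^*$ while maintaining a definite oscillation of $h$-values near $x^*$.

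The main obstacle is precisely the final step. Bare non-monotonicity at every scale does not, by itself, yield a discontinuity: the analogous statement for continuous self-maps of $[0,1]$ is false, as witnessed by nowhere-monotone Weierstrass-type functions. What is essential about $\mathbb{A}$, and what the argument must exploit, is zero-dimensionality (so that nested subintervals may be taken clopen, each homeomorphic to $\mathbb{A}$ itself) together with the asymmetry of its neighborhood structure: points of $\mathbb{A}_0$ are approached only from the left and points of $\mathbb{A}_1$ only from the right. The crux will be to choose $x^*$ so that it lies on the correct half for the orientation of the approach, and then use the flanking witnesses $a_n, c_n$ on the ``wrong'' side of $h(x^*)$ to contradict the relevant one-sided continuity of $h$ at $x^*$. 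Getting this orientation right, uniformly as one descends the nesting, is the technical heart of the proof.
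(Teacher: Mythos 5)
First, a point of comparison: the paper does not prove this proposition at all --- it is imported verbatim as Proposition~3.1 of \cite{hg} --- so there is no internal argument to measure yours against; your proposal has to stand on its own. Its first half does stand: letting $D$ be the union of all clopen intervals on which $h$ is monotone, the reduction of the proposition to the density of $D$ is correct (hereditary Lindel\"ofness gives a countable cover by such intervals, and the disjointification $K_n\setminus\bigcup_{k<n}K_k$ works because a clopen order-convex subset of $\mathbb{A}$ is compact, hence of the form $[\min,\max]$, so each difference splits into finitely many clopen intervals on which monotonicity is inherited).

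The gap is the one you name yourself: the density of $D$ is the entire content of the proposition, and your proposal stops exactly where that content begins. You correctly observe that ``non-monotone on every clopen subinterval'' cannot by itself force a discontinuity (Weierstrass-type functions on $[0,1]$), and you correctly identify the resources that must be used (one-sided neighborhoods, clopen subintervals homeomorphic to $\mathbb{A}$); but announcing that ``the crux will be to choose $x^*$ on the correct half'' is a statement of the problem, not a solution. In particular, nothing in the sketch explains how the oscillation witnesses $a_n\prec b_n\prec c_n$ survive the passage to the limit, nor why the limit point $x^*$ can be arranged to lie in $\mathbb{A}_0$ or $\mathbb{A}_1$ as the approach direction requires --- and it is not clear that a nested-interval construction is even the right mechanism. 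A workable route is more direct: from non-monotonicity extract $a\prec b\prec c$ with $h(b)\succ h(a)$ and $h(b)\succ h(c)$, let $M=\max h[[a,c]]$ (compactness) and $m=\max\bigl(h^{-1}(M)\cap[a,c]\bigr)$, so that $h(x)\prec M$ for all $x\in\,]m,c]$; one then contradicts continuity of $h$ at $m$ by comparing the side from which $]m,c]$ approaches $m$ with the side from which a neighborhood of $M$ in $\mathbb{A}$ is one-sided, with a symmetric argument using $\min h^{-1}(M)$ in the remaining cases. Some such case analysis on second coordinates is unavoidable, and it is precisely what your proposal defers. As written, this is a correct reduction plus an accurate description of the difficulty, but not a proof.
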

	The following proposition tell us how continuous monotone functions look like locally. 
	\begin{proposition}\label{basiclema}
		Let $h:\mathbb{A}\to\mathbb{A}$ be a monotone continuous function. Then there is a clopen interval $J$ so that either $h\restriction J$ is constant or $h\restriction J$ is strictly monotone.
	\end{proposition}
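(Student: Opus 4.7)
The plan is to case-split on whether $h$ is constant on some clopen interval; after composing with the order-reversing autohomeomorphism of $\mathbb{A}$ if necessary, we may assume $h$ is non-decreasing. If some level set $h^{-1}(c)$ has nonempty interior in $\mathbb{A}$, then since $\mathbb{A}$ has a basis of clopen intervals, this interior contains a clopen interval $J$ and $h|_J$ is constant; this disposes of one case.

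Otherwise, each level set, being a closed order-interval by monotonicity and continuity, is either a singleton or a gap pair $\{\langle a,0\rangle,\langle a,1\rangle\}$. Writing $h^-(a):=h(\langle a,0\rangle)$ and $h^+(a):=h(\langle a,1\rangle)$, I would first observe that $h^\pm$ are strictly increasing: if $h^+(b)=h^-(a)$ for some $b<a$ then monotonicity would force $h$ to be constant on the clopen interval $[\langle b,1\rangle,\langle a,0\rangle]$, against the current case. Set $D:=\{a\in(0,1):h^-(a)=h^+(a)\}$. The strict increase of $h^\pm$ implies that $h|_{[\langle c,1\rangle,\langle d,0\rangle]}$ is strictly monotone if and only if $(c,d)\cap D=\emptyset$. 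Hence it suffices to show that $D$ is not dense in $[0,1]$.

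Suppose for contradiction $D$ is dense, and define $\phi:D\to\mathbb{A}$ by $\phi(q):=h^-(q)=h^+(q)$; it is strictly increasing. Continuity of $h$ at $\langle q,0\rangle$ and $\langle q,1\rangle$ gives $h^-(q)=\sup_{b<q}h^+(b)$ and $h^+(q)=\inf_{b>q}h^-(b)$, and using the density of $D$ these simplify to
\[
\sup_{q'\in D,\,q'<q}\phi(q') \;=\; \phi(q) \;=\; \inf_{q'\in D,\,q'>q}\phi(q')
\]
with sups and infs taken in $\mathbb{A}$. Composing with the projection $\pi$ and extending from the dense set $D$ yields a continuous non-decreasing map $\tilde\psi:[0,1]\to[0,1]$; it must in fact be strictly increasing, for if $\tilde\psi$ were constant on some interval $(a_1,a_2)$, $\phi$ would confine the infinite set $D\cap(a_1,a_2)$ to the two-point set $\{\langle c,0\rangle,\langle c,1\rangle\}$, contradicting strict monotonicity of $\phi$.

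The contradiction then comes from examining the second coordinate of $\phi$. Write $\phi(q)=\langle\tilde\psi(q),r(q)\rangle$ with $r(q)\in\{0,1\}$. Since $\tilde\psi$ is continuous and strictly increasing, $\{\tilde\psi(q'):q'\in D,\,q'<q\}$ has supremum $\tilde\psi(q)$, not attained; a direct computation in $\mathbb{A}$ then shows $\sup_{q'\in D,\,q'<q}\phi(q')=\langle\tilde\psi(q),0\rangle$, so by the displayed equality $r(q)=0$. The symmetric computation for the infimum yields $\langle\tilde\psi(q),1\rangle$, forcing $r(q)=1$ and producing a contradiction. The main obstacle is pinpointing and carefully justifying this last two-sided limit computation: the disconnectedness of $\mathbb{A}$ means that any monotone approach from below lands at the $0$-point of the gap pair above $\tilde\psi(q)$ and from above at the $1$-point, and it is precisely this rigidity that precludes a dense $D$ and delivers the clopen interval of strict monotonicity.
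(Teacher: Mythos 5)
Your proof is correct and follows essentially the same trichotomy as the paper's: either $h$ is constant on a clopen interval, or injective (hence strictly monotone) on one, or else the set $D$ of gap-collisions $h(\langle a,0\rangle)=h(\langle a,1\rangle)$ is dense, which is incompatible with continuity --- the paper merely asserts this last incompatibility, whereas you actually prove it via the $\sup$/$\inf$ computation, and that computation is sound. One small remark: the intermediate claim that $\pi\circ\phi$ extends to a \emph{continuous} map on all of $[0,1]$ is neither justified (nothing rules out a jump of $\pi\circ\phi$ at a point outside $D$) nor needed, since the final contradiction only uses the two-sided limit relations at points of $D$, and these follow directly from your displayed equality together with the fact that $\pi$ preserves suprema and infima.
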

	\begin{proof}
		On one hand, if there is a clopen interval $J$ so that $h\restriction J$ is an injection, then there is nothing to prove. On the other hand, if there are $x,y\in \mathbb{A} $ so that $\pi(x)\ne \pi(y)$ and $h(x)=h(y)$, then we are similarly  finished. If neither of the above alternatives hold, then there exists a dense set $D\subset \mathbb{A}\times \{\langle 0,1\rangle \}$ so that $h\restriction (\mathbb{A}\setminus D)$ is strictly monotone and $h(\langle a,0\rangle)=h(\langle a,1\rangle)$ for any $\langle a,0\rangle\in\mathbb{A}.$ However, such a function cannot be continuous. This finishes the proof of the Lemma.
	\end{proof}
	\begin{definition}
		Let $h:\mathbb{A}\to ^m\mathbb{A}$ and $j_0\in m$ be given. We say that a clopen interval $J$ is $j_0$-good for $h$	if   $ h_{j_0}\restriction J$ is  strictly monotone  and  $h_{j}\restriction J$ is constant for any $j\in m\setminus\{j_0\}$. We say that $J$ is good for $h$ if it is $j_0$-good for some $j_0\in m$.

	\end{definition}
	\begin{rmk}
		In other words, $J$ is $j_0$-good for $h$ if and only if $h$ sends $J$ into a line parallel to the $j_0$-th axis. 
	\end{rmk}
	
	The following lemma gives an indication as to why this definition will play a role.
	It will be used in the verification of Theorem \ref{main1}.
	
	\begin{lemma}\label{gdlema}
		Let $h:\mathbb{A}\to ^m\mathbb{A}$ be an embedding such that $h''[\mathbb{A}]$ is $G_\delta$ in $^m\mathbb{A}$. Then there exists  a pairwise disjoint sequence   $J_n (n\in\omega)$ of clopen intervals such  that $\bigcup_{n\in\omega} J_n$ is dense in $\mathbb{A}$ and for each $n,$ there is $j\in m$ so that  $J_n$ is $j$-good for $h$. 
	\end{lemma}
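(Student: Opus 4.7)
The plan is to produce the $J_n$ in two stages. First, combining Propositions \ref{basicprop} and \ref{basiclema}, I refine $\mathbb{A}$ into a dense collection of pairwise disjoint clopen intervals on each of which every coordinate of $h$ is either constant or strictly monotone. Second, I use the $G_\delta$ hypothesis together with the classical theorem that a compact Hausdorff space is metrizable if and only if its diagonal is $G_\delta$ in its square to forbid two simultaneously strictly monotone coordinates on the same interval, thereby forcing each interval to be $j$-good for some unique $j$. The key obstacle lies in this second stage.

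For the first stage, apply Proposition \ref{basicprop} to each coordinate $h_i = \pi_i \circ h$ to obtain a dense open $U_i \subseteq \mathbb{A}$ that decomposes as a disjoint union of clopen intervals on which $h_i$ is monotone. Since $\mathbb{A}$ is compact Hausdorff and hence Baire, the intersection $\bigcap_{i \in m} U_i$ is dense open and, by distributing intersections over unions, is itself a disjoint union of clopen intervals $\{I_n\}$ on which every $h_i$ is monotone. Each $I_n$ is homeomorphic to $\mathbb{A}$, so a standard maximality argument applying Proposition \ref{basiclema} to each $h_i \restriction I_n$ and intersecting the resulting dense open refinements over $i \in m$ yields a pairwise disjoint family of clopen intervals $\{K_\ell\}$ with dense union in $\mathbb{A}$, on each of which every $h_i$ is either constant or strictly monotone.

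For the second stage, fix $\ell$ and set $S_\ell = \{i \in m : h_i \restriction K_\ell \text{ is strictly monotone}\}$. Injectivity of the embedding $h$ on the infinite set $K_\ell$ forces $S_\ell \neq \emptyset$, and $K_\ell$ is $j$-good precisely when $|S_\ell| = 1$. Suppose, for contradiction, that distinct $i_1, i_2 \in S_\ell$, and let $c_j \in \mathbb{A}$ be the constant value of $h_j \restriction K_\ell$ for $j \notin \{i_1, i_2\}$. Because $K_\ell$ is clopen in $\mathbb{A}$ and $h$ is an embedding, $h''[K_\ell]$ is clopen in $h''[\mathbb{A}]$, hence of the form $h''[\mathbb{A}] \cap W$ for some open $W \subseteq {}^m\mathbb{A}$; combined with the $G_\delta$ representation of $h''[\mathbb{A}]$, this shows $h''[K_\ell]$ is $G_\delta$ in ${}^m\mathbb{A}$. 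This image lies in the closed subspace $Y := \{x \in {}^m\mathbb{A} : x_j = c_j \text{ for all } j \notin \{i_1, i_2\}\}$, which is canonically homeomorphic to $\mathbb{A}^2$ via projection to the coordinates $i_1, i_2$; under this identification, $h''[K_\ell]$ corresponds to the graph $\Gamma = \{(h_{i_1}(x), h_{i_2}(x)) : x \in K_\ell\} \subseteq \mathbb{A}^2$, and restriction to the closed $Y$ shows $\Gamma$ is $G_\delta$ in $\mathbb{A}^2$.

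To close the argument, I convert this into a contradiction with non-metrizability. Let $A_k := h_{i_k}(K_\ell)$; each $h_{i_k}$ restricts to a homeomorphism $K_\ell \to A_k$, so $A_1, A_2 \cong \mathbb{A}$ are non-metrizable compact Hausdorff spaces. Writing $\phi := h_{i_2} \circ (h_{i_1} \restriction K_\ell)^{-1} : A_1 \to A_2$ for the resulting strictly monotone homeomorphism, one has $\Gamma = \{(x, \phi(x)) : x \in A_1\}$, and the self-homeomorphism $(x, y) \mapsto (x, \phi^{-1}(y))$ of $A_1 \times A_2$ carries $\Gamma$ onto the diagonal $\Delta_{A_1}$. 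Consequently $\Delta_{A_1}$ would be $G_\delta$ in $A_1 \times A_1$, contradicting the cited metrization theorem applied to $A_1 \cong \mathbb{A}$. Therefore $|S_\ell| = 1$ for every $\ell$, each $K_\ell$ is $j$-good for a unique $j$, and enumerating the $K_\ell$ produces the required sequence $\{J_n\}$.
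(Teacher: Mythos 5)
Your overall strategy is sound and genuinely different from the paper's at the decisive step. The paper also first refines to clopen intervals on which each coordinate of $h$ is constant or strictly monotone, but it then rules out two strictly monotone coordinates by a direct, self-contained argument: it covers the compact image by countably many finite unions of clopen boxes, observes that only countably many reals occur as projections of endpoints of those boxes, and at any other point exhibits the twin pair $\langle a,0\rangle,\langle a,1\rangle$ in one monotone coordinate, both of which lie in every open set of the cover while injectivity in the second monotone coordinate excludes one of them from the image. Your replacement of this by transporting the image to the graph of a homeomorphism, straightening the graph to a diagonal, and invoking \v{S}ne\u{\i}der's $G_\delta$-diagonal metrization theorem together with the non-metrizability of $\mathbb{A}$ is a legitimate and more conceptual alternative; indeed the paper's hands-on claim is, in effect, a proof of exactly the instance of that theorem it needs.

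There is, however, a genuine gap in your second stage. After fixing distinct $i_1,i_2\in S_\ell$ you set $c_j$ equal to ``the constant value of $h_j\restriction K_\ell$'' for every $j\notin\{i_1,i_2\}$ and place $h''[K_\ell]$ inside the slice $Y$. This tacitly assumes $S_\ell=\{i_1,i_2\}$: if $|S_\ell|\geq 3$ there is a third strictly monotone coordinate, $c_j$ is undefined for it, and $h''[K_\ell]\not\subseteq Y$, so the reduction to $\mathbb{A}^2$ collapses. Projecting onto the coordinates $i_1,i_2$ does not rescue the argument, since images under projections need not remain $G_\delta$. (The paper's corresponding claim, by contrast, imposes no hypothesis on the remaining coordinates.) The fix is routine but must be stated: fix only the coordinates outside $S_\ell$, so that $h''[K_\ell]$ sits as a $G_\delta$ inside a slice homeomorphic to ${}^{k}\mathbb{A}$ with $k=|S_\ell|$; transport it by the coordinatewise homeomorphism $\prod_{j\in S_\ell}\phi_j^{-1}$, where $\phi_j=h_j\circ(h_{i_1}\restriction K_\ell)^{-1}$, onto the full $k$-fold diagonal of $A_{i_1}^{\,k}$; and then note that the $2$-fold diagonal of $A_{i_1}$ is the preimage of the $k$-fold diagonal under the continuous map $(x,y)\mapsto(x,y,\dots,y)$, hence is also $G_\delta$, after which \v{S}ne\u{\i}der's theorem applies as you intended.
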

	\begin{proof}
		
		Let $U$ denote the union of all clopen intervals which are good for $h$. Since $\mathbb{A}$ is separable, it suffices to show that $U$ is dense. In order to get a contradiction, suppose that there is a nonempty  clopen interval $J$ disjoint from $U$. By going to a clopen sub-interval of $J$ if necessary, and applying, Proposition \ref{basicprop} and Lemma \ref{basiclema}, $m$ times, we may assume that $h_j\restriction J$ is either one-to-one
		or constant, for any $j\in m$. Since $h$ is an embedding there is at least one $j\in m,$ such that $h_j$ is non-constant (equivalently, strictly monotone). Therefore, we are left to show that there is at most one  $j\in m$ so that $h_j$ is non-constant.
		\begin{claim}
			If there exists $j_0\ne j_1\in m$ such that $h_{j_0}\restriction J$ and $h_{i_1}\restriction J$ are one-to-one, then  $X=h"[J]$ is not  a $G_\delta$ in $^m\mathbb{A}$.
		\end{claim} 
		\begin{proof}
			
			Let $X\subseteq\bigcap_{n\in \omega}U_n,$ where each $U_n$ is an open set.  Since $X$ is compact, we may assume, that $U_n=\bigcup_{i\in k_n } \prod\limits_{j\in m} I_{n,i}^j,$ where $I_{n,i}^j$ 
			are clopen intervals. Let $A$ be equal to 
			$$\{\pi(x):\exists n\in\omega \exists i\leq k_n, j\in\{j_0,j_1\} (x\in\{\min(I_{n,i}^j),\max(I_{n,i}^j)\})\}$$
			Fix $x\in X$ so that  $\pi(x(i_0))$ does not  belong to $A$, this is possible as $A$ is countable and $h_{i_0}\restriction J$ is an injection. We claim that both  points $$x\restriction (m\setminus\{i_0\})\cup \{(i_0,\langle \pi(x),0\rangle)\}\  \rm{ and}\ x\restriction (m\setminus\{i_0\})\cup \{(i_0,\langle \pi(x),1\rangle)\} $$  belong to $ \bigcap_{n\in \omega}U_n.$ However,  only one of them belongs to $X$ as $h_{j_1}$ is injective, which is a contradiction. In order to prove the claim, fix $N\in\omega.$ As $x\in X\subseteq \bigcap_{n\in \omega}U_n.$ We can find $i\in k_n$ so that $x\in  \prod\limits_{j\in m} I_{N,i}^j$. Note that, $\langle \pi(x(i_0)),0\rangle $ and  $\langle \pi(x(i_0)),1\rangle,  $ both belong to $I_{N,i}^{j_0}$, as neither of them are the maximum nor the minimum.  Thus, $x\restriction (m\setminus\{i_0\})\cup \{(i_0,\langle \pi(x),0\rangle)\}$   and $ x\restriction (m\setminus\{i_0\})\cup \{(i_0,\langle \pi(x),1\rangle)\}$ belong to $U_N$ as required. This finishes the proof of the Claim. \end{proof}
		
		Observe that since $h''[\mathbb{A}]$ is a $G_\delta $ in $^m\mathbb{A}$ and $J$ is clopen in $\mathbb{A}$, it follows that $X$ is also a $G_\delta $ in $^m\mathbb{A}$, which contradicts the previous
		Claim.

	\end{proof}
	
	We are now ready to prove the main Theorem of the section. 
	
	\begin{theorem}
		Let $h:^m\mathbb{A}\to ^m\mathbb{A}$ be a homeomorphism. Then there is  a pairwise disjoint sequence of basic clopen boxes $U_n:=\prod\limits_{j\in m}I^j_n (n\in \omega) $ such that $\bigcup U_n$ is dense in $^m\mathbb{A}$ and $h\restriction U_n= \sigma\circ h^0\times\cdots\times h^{m-1}, $ where each $h^j: I_n^j\to \mathbb{A}$ is an strictly monotone homeomorphism onto a clopen interval, and $\sigma$ is a permutation of $^m\mathbb{A}$.
	\end{theorem}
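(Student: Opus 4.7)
The plan is to combine slice-by-slice applications of Lemma \ref{gdlema} with a rigidity argument made possible by $h$ being a homeomorphism, and then to assemble countably many local product boxes. For $x\in{}^m\A$ and $i\in m$, write $S_i(x)=\{y\in{}^m\A:y(j)=x(j)\text{ for all }j\ne i\}$ for the axis-parallel slice through $x$ in direction $i$. Since $\A$ is first-countable each singleton is $G_\delta$, so $S_i(x)$ is a $G_\delta$ subspace of ${}^m\A$; as $h$ is a homeomorphism, $h''[S_i(x)]$ is also $G_\delta$, and $h\restriction S_i(x)\colon S_i(x)\to {}^m\A$ is an embedding to which Lemma \ref{gdlema} applies. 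This yields, in each $S_i(x)$, a pairwise disjoint family of clopen intervals with dense union, each $j$-good for some $j\in m$.

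Call $x\in{}^m\A$ \emph{regular} if for every $i\in m$ there is a clopen $J^i_x\subseteq S_i(x)$ containing $x$ on which $h$ is $\sigma_x(i)$-good, for some $\sigma_x(i)\in m$. By the above, regular points are dense in every slice, hence dense in ${}^m\A$. I claim $\sigma_x$ is a permutation of $m$. If not, pick $i\ne i'$ with $\sigma_x(i)=\sigma_x(i')=j$. A $j$-good restriction is strictly monotone in the $j$-th coordinate and constant in the others, and one checks (using that left endpoints of clopen intervals of $\A$ must be of the form $\langle a,1\rangle$ and right endpoints of the form $\langle b,0\rangle$) that $h''[J^i_x]$ and $h''[J^{i'}_x]$ are then clopen intervals of $S_j(h(x))\cong\A$ that both contain $h(x)$; their intersection is therefore an open neighborhood of $h(x)$ in $S_j(h(x))$. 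However $S_i(x)\cap S_{i'}(x)=\{x\}$ and $h$ is injective, forcing $h''[J^i_x]\cap h''[J^{i'}_x]=\{h(x)\}$, a contradiction.

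The main technical step is to upgrade this point-wise picture to a clopen box. Given a regular $x$ with permutation $\sigma$, I would shrink the $J^i_x$ to a product box $U=\prod_i I^i\ni x$ coordinate by coordinate: after securing $I^0,\ldots,I^{k-1}$ witnessing the first $k$ entries of the desired partial product structure for $h$, I reapply Lemma \ref{gdlema} along direction-$k$ slices based at a dense family of points of $\prod_{i<k}I^i\times\prod_{i\ge k}J^i_x$ and shrink $J^k_x$ to a common $I^k$ on which $h$ is $\sigma(k)$-good uniformly; the permutation rigidity established in the previous paragraph, applied to each regular base-point encountered, rules out any direction other than $\sigma(k)$. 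After $m$ such steps the box $U$ has the advertised form. Finally, every regular point is contained in such a box, and because ${}^m\A$ is ccc (being separable) a maximal pairwise disjoint family $\{U_n\}_{n\in\omega}$ of these boxes is automatically countable and has dense union in ${}^m\A$. The main obstacle is precisely this propagation step: being $j$-good is not an open condition as the base-point of the slice varies, so the upgrade from a single slice at $x$ to a full box must invoke Lemma \ref{gdlema} repeatedly together with the rigidity of $\sigma$ in order to force local constancy of the permutation.
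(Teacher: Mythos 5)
Your overall strategy runs parallel to the paper's (apply Lemma \ref{gdlema} to axis-parallel slices, which are $G_\delta$ with $G_\delta$ images; extract a target direction $\sigma(i)$ for each domain direction $i$; show $\sigma$ is a permutation; use separability to pass from one box to a dense disjoint family), but three of your steps have genuine gaps. First, the density of regular points does not follow from what you establish: for each fixed $i$ the set $G_i$ of points lying in a good interval of their $i$-slice is dense (being relatively dense in every slice), but it is neither open nor known to be comeager, so the finite intersection $\bigcap_{i\in m}G_i$ --- which is what regularity requires --- could a priori fail to be dense, or even be empty. Second, your rigidity argument presumes that $h''[J^i_x]$ is a clopen interval of the target line, hence a neighborhood of $h(x)$ there. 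But the image of a clopen interval under a strictly monotone continuous injection of $\mathbb{A}$ into $\mathbb{A}$ need not be a neighborhood of a prescribed image point (such images can have gaps accumulating at $h(x)$), so you cannot conclude that the two images meet in more than $\{h(x)\}$. The paper avoids both problems by proving injectivity of $\sigma$ only after a whole box $V_k$ has been secured: there $h$ carries entire segments $E_{a,i}\cap V_k$ into target lines, and two parallel target lines with the same direction must coincide or be disjoint, which is played off against the fact that the corresponding domain lines still meet inside $V_k$.

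Third, and most importantly, the step you yourself flag as ``the main obstacle'' --- producing a single interval $I^k$ that is $\sigma(k)$-good for every base point of a box --- is exactly where the paper's proof does its real work, and your sketch does not supply it. For each base point $a$ the good interval produced by Lemma \ref{gdlema} depends on $a$, and ``shrinking to a common $I^k$'' has no a priori justification. The paper handles this by choosing, for each $a$, a good interval with rational endpoints $[\langle q_a,1\rangle,\langle r_a,0\rangle]$ together with its direction $j_{a,k}$, applying the Baire category theorem to the space of base points to stabilize $(q,r,j_0)$ on a set dense in some clopen box, and then using a limit argument (continuity of $h$ along sequences of base points) to show that the same interval is $j_0$-good for \emph{all} base points of that box. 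Without this uniformization the coordinate-by-coordinate recursion cannot be carried out, so the proposal as written does not yet constitute a proof.
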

	\begin{proof}
		Since $^m\mathbb{A}$ is separable, and every clopen box is homeomorphic to $^m\mathbb{A}$ via a product of strictly increasing homeomorphisms.  Thus, it suffices to show that there is a clopen box so that  $h$ restricted to it  is as desired.    For each  $a\in ^{m-1}\mathbb{A}$ and $i\in m,$ define the line $E_{a,i}=\{ x\in ^m\mathbb{A}: x\restriction (m\setminus\{i\})=a\}$, and define an embedding $e_{a,i}:\mathbb{A}\to ^m\mathbb{A}$  by $e_{a,i}(p)=a\cup \{(i,p)\}$. Note that $h''[E_{a,i}]$ is  a $G_\delta$ in  $^m\mathbb{A},$ since $E_{a,i}$ is $G_\delta$ in $^m\mathbb{A}$, and $h$ is a homeomorphism. 
		We will recursively construct, for $j\in m$, clopen boxes $V_j:=\prod\limits_{i\in m} J_i^j\subset ^{m}\mathbb{A} $ and functions  $\sigma_j:\{0,\dots,j\}\to m$ such that
		\begin{enumerate}[i.]
			
			\item  $V_{j+1}\subset V_j$ 
			for $j\in m-1.$
			\item For each $i<j$ and $a\in \pi_{m-\{i\}}''[V_j]$, $ I^j_i$ is $\sigma(i)$-good for $h\circ e_{a,i}$. 
			\item $\sigma_{j+1}\restriction j=\sigma_j$ and $\sigma_i$ is injective for $j\in m-1.$
		\end{enumerate}
		Suppose we have constructed $V_j, \sigma_j$ for $j<k\leq m$. By applying Lemma \ref{gdlema} to the map $h\circ e_{a,k}\restriction I^{k-1}_i$, we can find for each $a\in A:=\pi_{m-\{k\}}''[V_{k-1}],$ rationals   $q_a,r_a\in\mathbb{Q}$ and an integer $j_{a,k}$ such that $[\langle q_a,1\rangle, \langle r_a,0\rangle])\subset I^{k-1}_i $ is $j_{a,k}$-good for $h\circ e_{a,k}$. Since $A$ is a Baire space, there exists $j_0,q,r$ such that $A_{j_0,q,r}:=\{x\in A:q_x=q, r_x=r, j_{x,0}=j_0\} $ is dense in some clopen  box $V:=J_0\times\cdots\times J_{m-1}\subseteq A$.

		\begin{claim}

			The interval $[\langle q,1\rangle, \langle r,0\rangle] $ is $j_0$-good, for $h\circ e_{x,k}$ for any $x\in V$.
		\end{claim}
		
		\begin{proof}
			
			We must show that $h_j\circ e_{x,k}\restriction [\langle q,1\rangle, \langle r,0\rangle]$ is constant for any $j\in m\setminus\{j_0\}$. In order to do so, pick $j\in m\setminus\{j_0\}, x\in V$ and $s,t\in[\langle q,1\rangle, \langle r,0\rangle]$. Choose a sequence $x_n $ of elements of $A_{j_0,q,r}$ converging to $x$, this is possible as  $A_{j_0,q,r}$ is dense in $V$.  Notice that $e_{x_n,k}(s)$ and $e_{x_n,k}(t)$  converge to $e_{x,k}(s)$ and $e_{x,k}(t)$, respectively. By assumption, we have that  $h_i(e_{x,k}(s))=\lim\limits_{n\to\infty}h_i(e_{x_n,k}(s))=\lim\limits_{n\to\infty}h_i(e_{x_n,k}(t))=h_i(e_{x,k}(t)).$  \end{proof}
		
		Define $$V_k=J_0\times\dots\times J_{k-1}\times[\langle q,1\rangle, \langle r,0\rangle]\times J_{k+1}\times\dots\times J_{m-1} \ \rm{ and} \  \sigma_k=\sigma_{k-1}\cup\{(k,j_0)\}.$$ It follows from the previous claim that properties  i. and ii. hold and also clearly  $\sigma_k$ extends $\sigma_{k-1}$. Hence, we are left to show that $\sigma_k$ is injective. Suppose for a contradiction that $\sigma_k(i_0)=\sigma_k(i_1)$ for some $i_0\ne i_1$. Pick $a\ne b\in V_k$ so that $$a^{i_0}:=a\restriction m-\{i_0\}=b\restriction m-\{i_0\}=:b^{i_0},\ \rm{ and\ let}\  a^{i_1}=a\restriction m-\{i_1\}, b^{i_1}=b\restriction m-\{i_1\}.$$  Notice that $|E_{a^{i_0},i_0}\cap E_{a^{i_1},i_1}|=1. $ Hence, it follows that $h''[E_{a^{i_0},i_0}\cap V_k]\subset E_{h(a)^{i_0},\sigma(i_0)}$ and also $h''[E_{a^{i_1},i_1}\cap V_k]\subset E_{h(a)^{i_1},\sigma(i_1)}.$ By assumption, $E_{h(a)^{i_0},\sigma(i_0)}=E_{h(a)^{i_1},\sigma(i_1)}.$ However, this would imply that $h''[E_{a^{i_1},i_1}\cap V_k]\cap h''[E_{b^{i_0},i_0}\cap V_k]=\emptyset$, which contradicts the fact that $|E_{a^{i_1},i_1}\cap E_{b^{i_0},i_0}\cap V_k|=1.$
		Finally, let $\sigma=\sigma_{m-1}$ and let $h^j=h_j\circ e_{a^j,j}\restriction I^k_j$ for some fixed $a\in V_k$ and $j\in m$. It follows from our construction 
		that $h\restriction V_k=\sigma\circ h^0\times\dots\times h^m$ as required. \end{proof}
	
	The previous Theorem  give us an a posteriori explanation of why the space $^m\mathbb{A}$ is not countable dense homogeneous for any $m\geq 2$. A fact first observed by Arhangel'ski\v{i} and van Mill \cite{avm} for $m=1$ and by Hernández-Gutiérrez in the case $m\geq 2.$ 
	
	\begin{corollary}[\cite{hg}]
		The space $^m\mathbb{A}$ is not countable dense homogeneous for any $m\geq 1.$
		
	\end{corollary}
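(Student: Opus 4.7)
The plan is to use Theorem \ref{main1} to extract a combinatorial invariant that any autohomeomorphism of $^m\A$ must respect on each piece of the dense clopen decomposition, and then exhibit two countable dense subsets of $^m\A$ that cannot be matched by such a homeomorphism. For each $\epsilon\in\{0,1\}^m$, write $C_\epsilon:=\prod_{j\in m}\A_{\epsilon_j}$ for the \emph{$\epsilon$-corner}; each $C_\epsilon$ is dense in $^m\A$, and the $2^m$ corners partition $^m\A$.

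First I would fix the two countable dense sets. Let
\[
D:=\{(\langle q_0,0\rangle,\dots,\langle q_{m-1},0\rangle):q_j\in\Q\cap(0,1]\}\subset C_{\overline 0},
\]
which is countable and dense in $^m\A$ because the rational points of $\A_0$ are already dense in $\A$. Let $E$ be the set of all ``rational'' points of $^m\A$, namely the union over $\epsilon\in\{0,1\}^m$ of the points of $C_\epsilon$ all of whose coordinates have rational first component. Then $E$ is countable and dense in $^m\A$, and a direct check on a basic clopen box shows that $E\cap C_\epsilon$ is dense in $^m\A$ for \emph{every} $\epsilon\in\{0,1\}^m$.

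Next, assume toward a contradiction that some homeomorphism $h:{}^m\A\to{}^m\A$ satisfies $h(D)=E$, and apply Theorem \ref{main1}: there is a pairwise disjoint sequence of basic clopen boxes $U_n=\prod_j I_n^j$ with $\bigcup_n U_n$ dense in $^m\A$ and
\[
h\restriction U_n=\sigma_n\circ\bigl(h^0_n\times\cdots\times h^{m-1}_n\bigr),
\]
where each $h^j_n$ is a strictly monotone homeomorphism between clopen intervals of $\A$ and $\sigma_n$ is a coordinate permutation. The key observation is that such an $h^j_n$ maps $\A_0$-points to $\A_0$-points when increasing and to $\A_1$-points when decreasing, because the order-theoretic property ``is approached from the left inside its clopen interval'' characterises $\A_0$-points and is preserved (respectively reversed) by increasing (respectively decreasing) homeomorphisms. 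Consequently there is a single $\delta(n)\in\{0,1\}^m$ such that $h(D\cap U_n)\subset C_{\delta(n)}$.

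Finally, pick any $n$ with $U_n\neq\emptyset$ and set $V:=h(U_n)$, a nonempty clopen box. Since $m\geq 1$ we have $2^m\geq 2$, so we may choose $\epsilon\in\{0,1\}^m$ with $\epsilon\neq\delta(n)$, and then by the density property of $E$ there is a point $y\in E\cap C_\epsilon\cap V$. But $y\in E\cap V=h(D)\cap h(U_n)=h(D\cap U_n)\subset C_{\delta(n)}$, contradicting $\epsilon\neq\delta(n)$. The main obstacle is the key observation in the previous paragraph, which shows that the corner type is a rigid invariant on each clopen piece of the decomposition provided by Theorem \ref{main1}; once this is in hand, the construction of $D$ and $E$ and the resulting contradiction are essentially immediate.
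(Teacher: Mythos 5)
Your proposal is correct and takes exactly the route the paper intends: the paper states this corollary as an immediate consequence of Theorem \ref{main1} (citing \cite{hg} and offering no details), and your argument is a valid, complete implementation of that derivation. The key observation — that a strictly monotone homeomorphism between clopen intervals of $\A$ preserves (if increasing) or swaps (if decreasing) the $\A_0$/$\A_1$ type of points, so that on each box $U_n$ the image of the ``corner'' $C_{\overline{0}}$ lands in a single corner $C_{\delta(n)}$, while $E$ meets every corner densely — is sound, including at the endpoints of the clopen intervals, and yields the desired contradiction for every $m\geq 1$.
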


	\section{NON-HOMOGENEITY OF $\mathcal{F}_m(\mathbb{A})$}
	
	In this section we prove Theorem \ref{main2}.  It will be convenient to introduce some notation.
	Let $\Delta_m=\{x\in ^m\mathbb{A} : \forall i\in m-1 (x(i)\leq x(i+1))\}$, let $\rho:\Delta_m\to \mathcal{F}_m(\mathbb{A})$ be the map given by $\rho(x)=\{x(0),\dots,x(m-1)\}$ and let $\sim $ denote the equivalence relation on $\Delta_m$ defined by $x\sim y$ if and only if $\rho(x)=\rho(y).$ Finally,  let $q:\Delta_m\to \Delta_m/\sim$ be the quotient map, we will sometimes write $[x]$, instead of $q(x),
	$ to represent the equivalence class. We consider $\Delta_m/\sim$ as a topological space with the quotient topology.

	The following classical fact give us a more geometric representation of $\mathcal{F}_m(\mathbb{A})$.
	\begin{proposition}[\cite{g54}]
		\label{ahom}
		The  map $\Tilde{\rho}: \Delta_m /\sim\to \mathcal{F}_m(\mathbb{\A})$ given by $\Tilde{\rho}([x])=\rho(x)$ is a homeomorphism.
	\end{proposition}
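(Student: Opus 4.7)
The plan is the standard compact-to-Hausdorff trick after verifying that $\tilde\rho$ is a well-defined continuous bijection.

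First I would check the set-theoretic points. The map $\tilde\rho$ is well-defined by the very definition of $\sim$, and injective for the same reason: if $\rho(x)=\rho(y)$ then $x\sim y$, so $[x]=[y]$. For surjectivity, given any nonempty $F=\{a_0\prec a_1\prec\cdots\prec a_{k-1}\}\in\mathcal{F}_m(\mathbb{A})$ with $k\le m$, define $x\in\Delta_m$ by $x(i)=a_i$ for $i<k$ and $x(i)=a_{k-1}$ for $k\le i<m$; then $\rho(x)=F$.

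Next I would verify continuity of $\tilde\rho$. Since $q:\Delta_m\to\Delta_m/{\sim}$ is a quotient map, it suffices to show that $\rho=\tilde\rho\circ q$ is continuous from $\Delta_m$ (with the subspace topology inherited from $^m\mathbb{A}$) into $\mathcal{F}_m(\mathbb{A})$ with the Vietoris topology. A subbasis for the Vietoris topology on $\hyp(\mathbb{A})$ consists of the sets $\{F:F\subseteq U\}$ and $\{F:F\cap U\neq\emptyset\}$ for $U$ open in $\mathbb{A}$. For the first kind, $\rho^{-1}(\{F:F\subseteq U\})=\{x\in\Delta_m:\forall i\,(x(i)\in U)\}=\Delta_m\cap\prod_{i\in m}U$, which is open. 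For the second, $\rho^{-1}(\{F:F\cap U\neq\emptyset\})=\Delta_m\cap\bigcup_{i\in m}\pi_i^{-1}(U)$, also open. Hence $\rho$ and therefore $\tilde\rho$ is continuous.

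Finally I would invoke compactness. The double arrow $\mathbb{A}$ is compact, so $^m\mathbb{A}$ is compact, and $\Delta_m$ is closed in $^m\mathbb{A}$ (it is defined by the closed conditions $x(i)\preceq x(i+1)$), hence compact. The continuous image $\Delta_m/{\sim}=q(\Delta_m)$ is therefore compact. On the other hand $\mathcal{F}_m(\mathbb{A})\subseteq\hyp(\mathbb{A})$ is Hausdorff since $\mathbb{A}$ is Hausdorff. A continuous bijection from a compact space to a Hausdorff space is a homeomorphism, and this concludes the proof.

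No step here is really an obstacle; the only mildly delicate check is verifying that $\Delta_m$ is closed in $^m\mathbb{A}$, which requires remembering that the order $\preceq$ on $\mathbb{A}$ has closed graph in $\mathbb{A}^2$ (a standard property of the order topology on a compact linearly ordered space). Once that is in hand, the argument is entirely formal.
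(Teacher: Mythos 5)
Your argument is correct. The paper itself gives no proof of this proposition --- it is quoted as a classical fact with a citation to Ganea --- so there is nothing to compare against except the standard argument, which is exactly what you give: well-definedness and bijectivity from the definition of $\sim$, continuity of $\rho$ checked on the Vietoris subbasis and transferred to $\tilde\rho$ via the quotient map, and then compactness of $\Delta_m/\!\sim$ against Hausdorffness of the target. Two tiny points of hygiene: the closedness of $\{(a,b):a\preceq b\}$ in the square holds for the order topology on \emph{any} linearly ordered set (split into the cases where $b\prec a$ have or do not have an element strictly between them), so no compactness is needed there; and ``$\mathcal{F}_m(\mathbb{A})$ is Hausdorff because $\mathbb{A}$ is'' deserves a half-line of justification, since for the full Vietoris hyperspace one needs regularity of the base space --- for finite subsets Hausdorffness does suffice (separate a point of $F\setminus G$ from the finitely many points of $G$ by disjoint open sets), or one can simply note that $\mathbb{A}$ is compact Hausdorff, hence regular.
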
 
	The next proposition is straightforward and it is left to the reader.
	\begin{proposition}
		Every clopen subset of $^m\mathbb{A}$ is homeomorphic to $^m\mathbb{A}$.
		
	\end{proposition}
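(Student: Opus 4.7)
The plan is to decompose the clopen set into finitely many pairwise disjoint basic clopen boxes, show each such box is homeomorphic to ${}^m\mathbb{A}$, and then verify that a finite disjoint union of copies of ${}^m\mathbb{A}$ is itself homeomorphic to ${}^m\mathbb{A}$.

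First, I would observe that the clopen intervals of the form $[\langle a,1\rangle,\langle b,0\rangle]$ with $0\le a<b\le 1$ form a basis for $\mathbb{A}$, so their finite products form a clopen basis for ${}^m\mathbb{A}$. Given a clopen subset $U\subseteq{}^m\mathbb{A}$, compactness yields a finite cover of $U$ by basic clopen boxes contained in $U$. Since the class of basic clopen boxes is closed under finite intersection, and the complement of a box inside another box decomposes as a finite disjoint union of boxes, a standard refinement argument produces a pairwise disjoint decomposition $U=B_1\sqcup\cdots\sqcup B_k$ into basic clopen boxes.

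Next, I would verify that each basic clopen box is homeomorphic to ${}^m\mathbb{A}$. For this it is enough to show that an arbitrary clopen interval $J=[\langle a,1\rangle,\langle b,0\rangle]$ is homeomorphic to $\mathbb{A}$, and then take products. The affine bijection $x\mapsto(x-a)/(b-a)$ from $[a,b]$ onto $[0,1]$ induces the order-isomorphism $\langle x,i\rangle\mapsto\langle(x-a)/(b-a),i\rangle$ between $J$ and $\mathbb{A}$, and this is a homeomorphism because both spaces carry the order topology associated to the given linear order.

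Finally, I would show that ${}^m\mathbb{A}$ is homeomorphic to any finite disjoint union of copies of itself. Splitting $\mathbb{A}$ as the disjoint union of the two clopen intervals $[\langle 0,1\rangle,\langle 1/2,0\rangle]$ and $[\langle 1/2,1\rangle,\langle 1,0\rangle]$ (each homeomorphic to $\mathbb{A}$ by the previous step) expresses $\mathbb{A}$ as the disjoint union of two clopen copies of itself; crossing with ${}^{m-1}\mathbb{A}$ and iterating partitions ${}^m\mathbb{A}$ into any finite number $k$ of pairwise disjoint clopen copies of ${}^m\mathbb{A}$. Combined with the previous steps, this yields $U\cong{}^m\mathbb{A}$. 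There is no substantial obstacle here; the only mildly nontrivial point is the order-isomorphism between an arbitrary clopen interval and $\mathbb{A}$, which is the reason the proposition is stated rather than silently assumed.
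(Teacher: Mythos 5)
Your proof is correct. The paper offers no argument for this proposition (it is explicitly ``left to the reader''), so there is nothing to compare against; your three steps --- refining a finite cover of the compact clopen set into a disjoint union of basic clopen boxes, identifying each clopen interval $[\langle a,1\rangle,\langle b,0\rangle]$ with $\mathbb{A}$ via the affine order-isomorphism, and absorbing a finite disjoint union of copies of ${}^m\mathbb{A}$ back into ${}^m\mathbb{A}$ by splitting an interval --- constitute exactly the standard argument the authors are implicitly invoking, and each step is sound. The only (trivial) caveat is that the statement should be read as referring to nonempty clopen subsets.
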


		
		
			
		

	\begin{lemma}
		If $\F_m(\A)$ is homogeneous, then it is homeomorphic to $^m\mathbb{A}$.
		
	\end{lemma}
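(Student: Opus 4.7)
The plan is to show that every point of $\mathcal{F}_m(\mathbb{A})$ has a clopen neighborhood homeomorphic to ${}^m\mathbb{A}$, and then assemble these neighborhoods into a global homeomorphism using compactness.

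First I will produce a single such neighborhood. Choose any representative $x = (x_0, \ldots, x_{m-1}) \in \Delta_m$ with $x_0 \prec x_1 \prec \cdots \prec x_{m-1}$ strictly. Because the order on $\mathbb{A}$ has rational cuts dense in itself, I can pick pairwise disjoint clopen intervals $J_0, \ldots, J_{m-1}$ with $x_i \in J_i$ and $\max J_i \prec \min J_{i+1}$ for each $i$. The box $B := J_0 \times \cdots \times J_{m-1}$ lies in $\Delta_m$, and its image under $\rho$ is precisely the Vietoris basic set $\langle J_0, \ldots, J_{m-1}\rangle \cap \mathcal{F}_m(\mathbb{A})$: any $F \in \mathcal{F}_m(\mathbb{A})$ contained in $\bigsqcup J_i$ and meeting every $J_i$ must have exactly one point in each $J_i$. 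Since the $J_i$ are clopen in $\mathbb{A}$, this basic set is clopen in $\mathcal{F}_m(\mathbb{A})$. Now $\rho|_B$ is a continuous bijection from the compact space $B$ onto this clopen image, hence a homeomorphism, and $B$ is itself homeomorphic to ${}^m\mathbb{A}$ via Proposition of the preceding section. Thus $\rho(x)$ has a clopen neighborhood homeomorphic to ${}^m\mathbb{A}$.

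Next I will use homogeneity. If $\mathcal{F}_m(\mathbb{A})$ is homogeneous, then for every $y \in \mathcal{F}_m(\mathbb{A})$ there is a clopen neighborhood $W_y$ of $y$ homeomorphic to ${}^m\mathbb{A}$, by transporting the neighborhood constructed above via an autohomeomorphism. The space $\mathcal{F}_m(\mathbb{A})$ is compact, so finitely many $W_{y_1}, \ldots, W_{y_k}$ cover it. Disjointifying by $W_i' := W_{y_i} \setminus (W_{y_1} \cup \cdots \cup W_{y_{i-1}})$ gives a partition of $\mathcal{F}_m(\mathbb{A})$ into finitely many clopen pieces, each of which is clopen inside some $W_{y_i} \cong {}^m\mathbb{A}$. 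By the earlier proposition that every clopen subset of ${}^m\mathbb{A}$ is homeomorphic to ${}^m\mathbb{A}$, each non-empty $W_i'$ is homeomorphic to ${}^m\mathbb{A}$.

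Finally I have to absorb the finite disjoint union. A clopen box inside ${}^m\mathbb{A}$ and its complement are both clopen subsets of ${}^m\mathbb{A}$ and hence homeomorphic to ${}^m\mathbb{A}$, so ${}^m\mathbb{A} \cong {}^m\mathbb{A} \sqcup {}^m\mathbb{A}$; iterating, any finite disjoint union of copies of ${}^m\mathbb{A}$ is homeomorphic to ${}^m\mathbb{A}$. This yields $\mathcal{F}_m(\mathbb{A}) \cong {}^m\mathbb{A}$, completing the proof. The only place where something substantive happens is the opening construction of a product neighborhood at a point with distinct coordinates; everything afterwards is compactness plus the soft fact that clopen subsets of ${}^m\mathbb{A}$ are absorbed by ${}^m\mathbb{A}$.
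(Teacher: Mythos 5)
Your proof is correct, but it takes a genuinely different route from the paper's. The paper exploits the self-similarity of the symmetric product at a diagonal point: the saturated cubes ${}^m[0,\epsilon[$ give arbitrarily small clopen neighborhoods of $[\overline{0}]$ whose images under $q$ are homeomorphic to all of $\Delta_m/\sim$, so a single autohomeomorphism carrying $[\overline{0}]$ to a class $[x]$ with $m$ distinct coordinates, followed by a continuity argument, squeezes a copy of $\Delta_m/\sim$ into the clopen box neighborhood $q''[\prod_{i\in m} J_i]\cong{}^m\mathbb{A}$; the proposition that nonempty clopen subsets of ${}^m\mathbb{A}$ are homeomorphic to ${}^m\mathbb{A}$ then finishes in one step. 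You instead use homogeneity at every point to cover $\mathcal{F}_m(\mathbb{A})$ by clopen copies of ${}^m\mathbb{A}$, extract a finite subcover by compactness, disjointify, and absorb the finite topological sum via ${}^m\mathbb{A}\cong{}^m\mathbb{A}\sqcup{}^m\mathbb{A}$. Your opening construction of a product neighborhood at a point of exact cardinality $m$ is essentially the same first observation as the paper's, and both arguments lean on the same clopen-absorption proposition; the paper's version is more economical (one autohomeomorphism, no covering, no sum absorption), while yours avoids having to identify ${}^m[0,\epsilon[/\sim$ with $\Delta_m/\sim$ and works directly with the Vietoris topology. One cosmetic point: the proposition that every clopen subset of ${}^m\mathbb{A}$ is homeomorphic to ${}^m\mathbb{A}$ is stated in this same section, not the preceding one, but that does not affect the argument.
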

	\begin{proof}
		Suppose $\F_m(\A)$ is homogeneous, then there is an autohomeomorphism $h:\Delta_m/\sim \to \Delta_m/\sim$  such that $h([\overline{0}])=[x],$ where $x_0<x_1<\dots<x_{m-1}$.  On one hand, notice that, if  $J_0<\dots<J_{m-1}$ is a sequence of pairwise disjoint clopen intervals with $x_i\in J_i$ for $i\in m,$ then $q\restriction \prod\limits_{i\in m} J_i:\prod\limits_{i\in m}J_i\to \Delta_m/\sim $ is an homeomorphism. On the other hand, observe that for any $0<\epsilon<1$ the clopen cube $^m[0,\epsilon[$ is a saturated neighborhood of $\overline{0} $ such that $q''(^m[0,\epsilon[)$ is homeomorphic to $\Delta_m/\sim.$ Since $h$ is continuous there is an $\epsilon>$ such that $h''(^m[0,\epsilon[/\sim )\subset \prod\limits_{i\in m} J_i.$ It follows, from the previous Lemma, that $^m\mathbb{A}\cong \prod\limits_{i\in m} J_i\cong h''(^m[0,\epsilon[/\sim )\cong ^m[0,\epsilon[/\sim\cong \Delta_m/\sim.$ 
		
	\end{proof}
	\begin{theorem}
		The hyperspace $\mathcal{F}_m(\mathbb{A})$ is not homogeneous for any $m\geq 2$. 
	\end{theorem}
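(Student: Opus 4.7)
By the preceding lemma, it suffices to rule out $\mathcal{F}_m(\mathbb{A}) \cong {}^m\mathbb{A}$. Assume for contradiction that $h: {}^m\mathbb{A} \to \mathcal{F}_m(\mathbb{A})$ is a homeomorphism.

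The case $m=2$ is the crux. Since $\sim$ is trivial on $\Delta_2$ (two non-decreasing pairs with the same underlying set must coincide), Proposition \ref{ahom} identifies $\mathcal{F}_2(\mathbb{A})$ with $\Delta_2$ realized as a closed subspace of ${}^2\mathbb{A}$. Under this identification, the singleton diagonal $D:=\{\{a\}:a\in \mathbb{A}\}$ of $\mathcal{F}_2(\mathbb{A})$ becomes the usual diagonal $\{(t,t):t\in \mathbb{A}\}$ of ${}^2\mathbb{A}$. Since ${}^2\mathbb{A}$ is hereditarily Lindel\"of (a classical property of the double arrow square), every closed set in ${}^2\mathbb{A}$ is $G_\delta$; in particular so is $D$. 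Let $D':=h^{-1}(D)$, a closed copy of $\mathbb{A}$ in ${}^2\mathbb{A}$. Then $h|_{D'}:D'\to {}^2\mathbb{A}$ is an embedding with $G_\delta$ image, so Lemma \ref{gdlema} produces a pairwise disjoint sequence $(J_n)_{n\in\omega}$ of clopen intervals of $D'$, with dense union, such that for each $n$ the interval $J_n$ is $j_n$-good for some $j_n\in\{0,1\}$. Goodness demands that one of $\pi_0\circ h|_{J_n}, \pi_1\circ h|_{J_n}$ be strictly monotone and the other constant; but $h(J_n)\subseteq D$ forces the two coordinate projections of $h|_{J_n}$ to agree, which is impossible on the non-singleton interval $J_n$. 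This contradiction yields $\mathcal{F}_2(\mathbb{A})\not\cong {}^2\mathbb{A}$.

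For $m\geq 3$, the identification $\mathcal{F}_m(\mathbb{A})\cong \Delta_m/\sim$ is no longer a subspace of ${}^m\mathbb{A}$, so the singleton diagonal cannot be realized as a concrete set-theoretic diagonal inside ${}^m\mathbb{A}$. The proposed strategy is to apply Lemma \ref{gdlema} simultaneously to $h^{-1}(D_1)$ and to each preimage $h^{-1}(L_a)$ of the pair-line $L_a:=\{\{a,t\}:t\in \mathbb{A}\}$; each of these is a closed copy of $\mathbb{A}$ in ${}^m\mathbb{A}$, hence $G_\delta$ by hereditary Lindel\"ofness, and thus locally axis-parallel. Since $L_a\cap L_b=\{\{a,b\}\}$ and $L_a\cap D_1=\{\{a\}\}$, at each common intersection point the axes assigned by Lemma \ref{gdlema} must be pairwise distinct, because two segments of ${}^m\mathbb{A}$ parallel to the same axis cannot meet in a single point. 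A combinatorial analysis of these axis assignments over a dense set of parameters $a,b \in \mathbb{A}$, followed by a further application of Lemma \ref{gdlema} to witness the resulting incompatibility, is expected to force the required contradiction. The main obstacle is exactly this passage from $m=2$ to $m\geq 3$: without the ambient embedding $\Delta_2\subseteq{}^2\mathbb{A}$ that made the $m=2$ case immediate, the obstruction must be rebuilt combinatorially from the pair-line intersection data rather than read off directly from the geometry.
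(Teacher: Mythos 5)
Your reduction to ruling out $\mathcal{F}_m(\mathbb{A})\cong{}^m\mathbb{A}$ matches the paper, but the core of your argument rests on a false statement. The square ${}^2\mathbb{A}$ is \emph{not} hereditarily Lindel\"of: the anti-diagonal $\{(\langle x,0\rangle,\langle x,1\rangle):x\in\,]0,1[\}$ is an uncountable discrete subspace, so "closed $\Rightarrow G_\delta$" fails in ${}^2\mathbb{A}$. Worse, the specific closed set you need to be $G_\delta$ --- the diagonal $D=\{(t,t):t\in\mathbb{A}\}$ --- is precisely the kind of set that the Claim inside Lemma \ref{gdlema} proves is \emph{not} $G_\delta$ (apply it to the embedding $t\mapsto(t,t)$, whose two coordinates are both injective). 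So the hypothesis of Lemma \ref{gdlema} is not satisfied by $h|_{D'}$, the lemma cannot be invoked, and your contradiction for $m=2$ evaporates. The situation is in fact the reverse of what you assume: the singleton diagonal $\Gamma$ fails to be $G_\delta$ on \emph{both} sides of the putative homeomorphism, so no contradiction can be extracted from $G_\delta$-ness alone. For $m\geq 3$ you offer only a plan ("is expected to force the required contradiction"), which is an acknowledged gap, and it inherits the same defect: the pre-images $h^{-1}(L_a)$ and $h^{-1}(D_1)$ are closed but there is no reason for them to be $G_\delta$, so Lemma \ref{gdlema} again does not apply to them.

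The paper's actual proof runs in the opposite direction and requires substantially more work. It observes that $\Gamma$ is not $G_\delta$ in $\Delta_m/\sim$ (because $q^{-1}(\Gamma)$ is the diagonal of $\Delta_m$), hence the induced embedding $\alpha(x)=h([\overline{x}])$ has non-$G_\delta$ image; by (the proof of) Lemma \ref{gdlema} this yields a clopen interval $J$ on which \emph{two} coordinates $\alpha_{i_0},\alpha_{i_1}$ are strictly monotone. The contradiction is then not immediate: it is extracted from the two-level structure of $\mathbb{A}$ by showing that for co-countably many $a$ the "flipped" points $P_a^{+}$, $Q_a^{-}$ still lie in the image, that their $h$-preimages $[x_a]$ must involve a coordinate bounded away from $a$ by a fixed rational, and finally by comparing accumulation points of $\{[\overline{\langle a,0\rangle}]:a\in B\}$ and $\{[x_a]:a\in B\}$ across a clopen partition. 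None of this machinery --- which is the real content of the theorem --- appears in your proposal, so the argument as written does not establish the result.
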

	\begin{proof}
		Suppose for a contradiction that there is a homeomorphism $h: \Delta_m/\sim \to ^m\mathbb{A},$ and let $\Gamma=\{[\overline{x}]\in \Delta_m/\sim : x\in\mathbb{A}\}.  $
		Observe that $\Gamma$ is homeomorphic to $\mathbb{A}$ and it is not $G_\delta$ in  $\Delta_m/\sim$ as $q^{-1}(\Gamma)=\{(x,\dots,x)\in ^m\mathbb{A}: x\in \mathbb{A\}}$ is not a $G_\delta$ in $\Delta_m$ (by Lemma \ref{gdlema}). We now  consider the embedding $\alpha: \mathbb{A}\to ^m\mathbb{A}$ given by $\alpha(x)=h([\overline{x}])$. Since $h''[\Gamma]$ is not a $G_\delta$ in $^m\mathbb{A},$ it follows, again by Lemma \ref{gdlema}, that there exits $j_0\ne j_1\in m$ and a clopen interval $J$ such that $\alpha_{i_0}:=\pi_{i_0}\circ \alpha $ and $\alpha_{i_1}:=\pi_{i_1}\circ \alpha$ are strictly monotone restricted to $J$. We will assume that both $\alpha_{i_0}\restriction J, \alpha_{i_1}\restriction J$ are strictly increasing, as the other cases are analogous.  
		\begin{claim}
			There is a countable subset $C\subset \pi''[J]$ such that $\pi(\alpha_{i_0}(\langle a,0\rangle))=\pi(\alpha_{i_0}(\langle a,1\rangle))$ and $\pi(\alpha_{i_1}(\langle a,0\rangle))=\pi(\alpha_{i_1}(\langle a,1\rangle))$ for any $a\in \pi''[J]\setminus C.$
		\end{claim}
		\begin{proof}
			Let $C_k=\{a\in \pi''[J]: \alpha_{i_k}(\langle a,0\rangle )< \pi(\alpha_{i_0}(\langle a,1\rangle )]\}$ for $k\in 2$. For each $a\in C_k,$ pick a rational $q_a$ such that $\alpha_{i_k}(\langle a,0\rangle )<q_a< \pi(\alpha_{i_0}(\langle a,1\rangle )$. Observe that since $\alpha_{i_k}$ is strictly monotone, the map $f:C_k\to \mathbb{Q}$ given by $f(a)=q_a,$ is one-to-one. Thus, $C=C_0\cup C_1$ is countable as desired.  
		\end{proof}
		For each $a \in A:=\pi''[J]\setminus C.$ Let $P_a^-=\alpha(\langle a,0\rangle), Q_a^+=\alpha(\langle a,1\rangle)$ and let 
		$$P_a^+=\alpha(\langle a,0\rangle)\restriction_{(m-\{i_0\})}\cup (i_0,\langle \pi(\alpha_{i_0}(\langle a,0\rangle ),1\rangle)$$
		and 
		$$Q_a^-=\alpha(\langle a,1\rangle)\restriction_{(m-\{i_1\}}\cup (i_1,\langle \pi(\alpha_{i_1}(\langle a,0\rangle ),1\rangle).$$
		
		Pick an element $[x_a]$ belonging to $$h^{-1}(\{P^-_a,P^+_a,Q^-_a,Q^+_a\})\setminus \{\{\Tilde{\rho}^{-1}(\langle a,0\rangle\}),\Tilde{\rho}^{-1}( \{\langle a,1\rangle\}),\Tilde{\rho}^{-1}(\{\langle a,0\rangle,\langle a,1\rangle\})  \}. $$ 
		Observe that, by our choice of $x_a,$ for any $x\sim x_a$ there is a $j\in m $ so that $\pi(x(j))\ne a.$ By successively refining $A$, we can find an uncountable subset $B\subset A,$  a natural number $N$ so that $q^{-1}([x_a])=\{x^i_a: i\in N\},$ an $N$-tuple $(j_0,\dots,j_{N-1})\in m^N$, a rational number $ q\in \mathbb{Q}$   such that $x_a^i(j_i)=x_a^{i'}(j_{i'})$ for all $i,i'\in N$ and $q\in]\pi(x^i_a(j_i)),a[$ \footnote{We are using the convention that $]a,b[=]\min(a,b),\max(a,b)[.$   } and  $h([x_a])=P^+_a$ (the case $h([x_a])=Q^-_a$ is analogous),  for any $a\in B.$
		Consider the clopen cubes   $U:=^m[\langle q,1\rangle, \langle 1,0\rangle ]$ and $V:=^m[\langle 0,1\rangle, \langle q,0\rangle ]$. Since both cubes are saturated we have that $\Tilde{U}:=q''[U]$ and $\Tilde{V}:=q''[V]$ form a clopen partition of $\Delta_m/\sim.$  Observe that $X:=\{[\overline{\langle a, 0\rangle}] : a\in B\}\subset \Tilde{U} $ and $Y:=\{[x_a]: a\in B\}\subset \Tilde{V}$, since $B$ is infinite (uncountable) and $\Delta_m/\sim$ is compact, it follows that its accumulation points are non-empty and disjoint. However, this contradicts the fact that $h''[X]=\{P_a^-: a\in B\}$ and $h''[Y]=\{P_a^+: a\in B\}$ have the same set of accumulation points. This finishes the proof of the Theorem. 
		
	\end{proof}
	
	It would be interesting to see if the above theorem can be extended to the hyperspace of all non-empty finite subsets $\mathcal{F}(\mathbb{A})$.
	\begin{question}
		Is the hyperspace $\mathcal{F}(\mathbb{A})$ homogeneous?
	\end{question}

	\section{HOMOGENEITY OF $\mathcal{F}_2(\mathbb{S})$.}
	
	In this section we prove Theorem \ref{main3}. It is worth mentioning that our proof is based on work of Bennett, Burke and Lutzer \cite{bbl} and we will also borrow some of its notation. 
	
	First of all, since the Sorgenfrey line is homeomorphic to $[0,1[$ with the subspace topology, we will assume that the $\mathbb{S}=[0,1[$. A  \textit{Sorgenfrey rectangle} denote any set of the form $[a,b[\times [c,d[$ where $a,b,c,d\in [0,1[$; $a<b$ and $c<d$. By the \textit{Euclidean closure} of such a rectangle we mean its closure  in the euclidean topology of $[0,1[^2$. Let $\Delta_2=\{(x,y)\in\mathbb{S}^2: x\leq y\}$ and let $\Delta$ be the diagonal of $\mathbb{S}$.
	
	For each $k\geq 1,$ let $L_k$ be the straight line joining the points $(0,\frac{1}{k})$ and $(1,1)$.

	\begin{proposition}
		[\cite{bbl}] {\label{rec}}
		There is a countable collection $\mathcal{T}$ of pairwise disjoint Sorgenfrey rectangles such that:
		
		\begin{itemize}
			\item [(1)] $\bigcup\T=\Delta_2\setminus\Delta$;
			\item [(2)] for each $T\in\T$, the Euclidean closure of $T$ is disjoint from $\Delta$;
			\item [(3)] for each $x\in [0,1[$ the set $\{T\in\T:T\cap(\{x\}\times ]x,1[)\neq\emptyset\}$ is infinite and can be indexed as $\{T_m:m\geq 1\}$ in such a way that for all $k$ points of $T_k$ lie above points of $T_{k+1}$.
			\item [(4)] for each $T\in\T$, there is a $k\geq 1$ such that $T$ is between $L_k$ and $L_{k+2}.$ 
		\end{itemize}
	\end{proposition}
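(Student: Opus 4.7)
The plan is to decompose $\Delta_2\setminus\Delta$ into countably many slanted strips bounded by consecutive lines $L_k$, and then tile each strip by Sorgenfrey rectangles. For $k\geq 1$, set $S_k=\{(x,y)\in\Delta_2\setminus\Delta:L_{k+1}(x)\leq y<L_k(x)\}$. Since $L_1$ coincides with the top edge $y=1$ of $[0,1[^2$ and the lines $L_k$ converge pointwise to $\Delta$ as $k\to\infty$, the family $\{S_k\}_{k\geq 1}$ partitions $\Delta_2\setminus\Delta$. Each $S_k$ is a slanted band of positive vertical width $(1-x)/(k(k+1))$ at $x$, and its Euclidean closure meets $\Delta$ only at the corner $(1,1)$, which lies outside $[0,1[^2$. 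Consequently, any Sorgenfrey rectangle contained in $S_k$ lies between $L_k$ and $L_{k+1}$, has Euclidean closure disjoint from $\Delta$, and automatically satisfies conditions (2) and (4).

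The technical step is to partition each $S_k$ into countably many pairwise disjoint Sorgenfrey rectangles. I subdivide $[0,1[$ by a sequence $0=a_{k,0}<a_{k,1}<\cdots\to 1$ whose column widths $a_{k,i+1}-a_{k,i}$ are small enough (depending on $k$ and on how close $a_{k,i}$ is to $1$) that the trapezoidal section of $S_k$ above each column $[a_{k,i},a_{k,i+1}[$ contains the non-empty inscribed rectangle $T_{k,i}=[a_{k,i},a_{k,i+1}[\times [L_{k+1}(a_{k,i+1}),L_k(a_{k,i})[$. The residual piece of $S_k$ inside each column consists of two thin slivers of the same combinatorial type as $S_k$, each bounded above and below by a horizontal edge and a segment of slope less than $1$; the construction is then repeated on each sliver. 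Since the total residual vertical measure shrinks by a definite factor at each recursion level, the iteration produces a countable family exhausting $S_k$, every point at positive vertical distance from both $L_k$ and $L_{k+1}$ being eventually enclosed by an inscribed rectangle.

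Conditions (1) and (3) then follow. Condition (1) is immediate from the nested partition. For (3), fix $x\in[0,1[$: the ray $\{x\}\times ]x,1[$ traverses every strip $S_k$ since $L_k(x)>x$ for all $k\geq 1$ when $x<1$, and inside each $S_k$ it meets at least one of our rectangles, namely the one covering the column (or sub-column at some recursion depth) containing $x$. Enumerating these rectangles in order of increasing $k$, and within each strip by decreasing height, yields the required indexing $\{T_m:m\geq 1\}$ with $T_m$ above $T_{m+1}$. The main obstacle I expect is the recursive step in the column construction: one must choose the sequences $\{a_{k,i}\}$ and control the residual slivers so that the inscribed rectangles are non-empty at every level and the recursion exhausts $S_k$, all while keeping the produced rectangles pairwise disjoint.
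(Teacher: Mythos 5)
The paper does not actually prove this proposition: it is quoted from Bennett--Burke--Lutzer \cite{bbl}, with only the remark that clause (4) can be extracted from their proof. Your attempt is therefore an independent reconstruction, and it breaks down at the very first step. Your strips $S_k=\{(x,y): L_{k+1}(x)\le y<L_k(x)\}$ contain their lower boundary, the graph of $L_{k+1}$, which is \emph{strictly increasing} (slope $k/(k+1)>0$). No Sorgenfrey rectangle contained in $S_k$ can contain a point $(x_0,L_{k+1}(x_0))$ of that graph: if $[\alpha,\beta[\times[\gamma,\delta[\ \subseteq S_k$, then taking $y=\gamma$ forces $L_{k+1}(x)\le\gamma$ for every $x\in[\alpha,\beta[$, while containing $(x_0,L_{k+1}(x_0))$ forces $\gamma\le L_{k+1}(x_0)$ and $x_0<\beta$; for any $x$ with $x_0<x<\beta$ strict monotonicity gives $L_{k+1}(x)>L_{k+1}(x_0)\ge\gamma$, a contradiction. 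So $S_k$ cannot even be covered, let alone partitioned, by Sorgenfrey rectangles lying inside it. Your own wording concedes the problem: you only claim that points ``at positive vertical distance from both $L_k$ and $L_{k+1}$'' are eventually enclosed, but $S_k$ contains points at zero vertical distance from $L_{k+1}$. The same obstruction reappears one level down in your recursion, in the lower sliver $\{(x,y): a\le x<b,\ L_{k+1}(x)\le y<L_{k+1}(b)\}$, whose closed slanted bottom edge can never be absorbed by inscribed rectangles of the form you describe.

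The flaw is not repairable by adjusting the half-open conventions: if instead $S_k=\{(x,y): L_{k+1}(x)<y\le L_k(x)\}$, the symmetric argument (applied at the closed upper boundary) shows that a Sorgenfrey rectangle lying below a strictly increasing line cannot contain a point of its graph either. Since a partition of $\Delta_2\setminus\Delta$ by regions bounded by the lines $L_k$ must assign each slanted boundary to one of the two adjacent regions, no such slab decomposition can be refined into Sorgenfrey rectangles. The pieces of any workable decomposition must have their closed (lower/left) boundaries made of horizontal and vertical segments, i.e.\ staircases; arranging such staircases to stay between $L_k$ and $L_{k+2}$ while satisfying (1)--(3) is exactly the combinatorial content of the construction in \cite{bbl} that your argument bypasses.
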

	It worth pointing out that clause (4) is a consequence of the proof in \cite{bbl}.
	
	Let $T$ be an element of the partition $\mathcal{T}$. If $T=[a,b[\times [c,d[$, then define $T^{U}=[a,b[\times [\frac{c+d}{2},d[$, $T^{L}=[a,b[\times [c,\frac{c+d}{2}[$ and $T^{S}=[c,d[\times [a,b[$. Finally, let $\mathcal{T}^S=\{T^S: T\in\mathcal{T}\}.$
	
	\begin{rmk}
		Note that $T^{U}$ is the upper half of $T$, $T^{L}$ is the lower half of $T$ and $T^S$ is the reflection of $T$ across the diagonal $\Delta$.
	\end{rmk}
	
	\begin{theorem}
		The spaces $\Delta_2$ and $\mathbb{S}^2$ are homeomorphic.
	\end{theorem}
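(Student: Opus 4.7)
My plan is to build an explicit homeomorphism $\phi\colon\Delta_2\to\mathbb{S}^2$ by exploiting the Bennett--Burke--Lutzer partition $\mathcal{T}$. Observe that $\{\Delta\}\cup\mathcal{T}$ partitions $\Delta_2$ while $\{\Delta\}\cup\mathcal{T}\cup\mathcal{T}^S$ partitions $\mathbb{S}^2$. To match these, I split each $T=[a,b[\times[c,d[\in\mathcal{T}$ into its halves $T^U,T^L$ and set $\phi|_\Delta=\mathrm{id}$, $\phi|_{T^U}(x,y)=(x,2y-d)$ (a Sorgenfrey-affine homeomorphism $T^U\to T$), and $\phi|_{T^L}(x,y)=(2y-c,x)$ (a Sorgenfrey homeomorphism $T^L\to T^S$ combining an affine stretch with a coordinate swap). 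Each piece is a bijection between the relevant clopen boxes and has a Sorgenfrey-continuous inverse; since every $T$ is Sorgenfrey-clopen in $\mathbb{S}^2$ and $T^U,T^L$ are Sorgenfrey-clopen in $T$, continuity of $\phi$ and $\phi^{-1}$ at points off $\Delta$ is automatic.

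The remaining step is continuity at a diagonal point $(x,x)$, $x\in[0,1)$. Let $(x_n,y_n)\to(x,x)$ in the Sorgenfrey topology with $(x_n,y_n)\in T_n\in\mathcal{T}$, write $T_n=[a_n,b_n[\times[c_n,d_n[$, and pick $\delta_n\to 0^+$ with $x_n,y_n\in[x,x+\delta_n[$. By property~(4), the Euclidean closure of $T_n$ sits between the lines $L_{k_n}$ and $L_{k_n+2}$ for some $k_n\geq 1$. Because $(x_n,y_n)$ lies above $L_{k_n+2}$ one has $y_n-x_n\geq(1-x_n)/(k_n+2)$, which combined with $x_n\to x<1$ and $y_n-x_n\to 0$ forces $k_n\to\infty$. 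Because the corner $(a_n,d_n)$ of the Euclidean closure lies below $L_{k_n}$, $d_n\leq a_n+(1-a_n)/k_n\leq x_n+1/k_n\to x$, and since $d_n>y_n\geq x$ we get $d_n\to x^+$; property~(2) gives $c_n>b_n>x_n\geq x$, and $c_n\leq y_n<x+\delta_n$ forces $c_n\to x^+$ as well. Inspecting the two formulas, both coordinates of $\phi(x_n,y_n)$ lie in $[x,x+\delta_n[\cup[c_n,d_n[$ in either case $(x_n,y_n)\in T_n^U$ or $(x_n,y_n)\in T_n^L$, hence eventually inside $[x,x+\epsilon[\times[x,x+\epsilon[$. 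Continuity of $\phi^{-1}$ at the diagonal is symmetric: given $(u_n,v_n)\to(x,x)$ with $(u_n,v_n)\in T_n^S$, apply the same analysis to the reflected point $(v_n,u_n)\in T_n$ to obtain $c_n,d_n\to x^+$, after which the inverse formulas $(u,v)\mapsto(u,(v+d)/2)$ on $T$ and $(u,v)\mapsto(v,(u+c)/2)$ on $T^S$ push $(u_n,v_n)$ into any basic Sorgenfrey neighborhood of $(x,x)$ in $\Delta_2$.

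The main obstacle is exactly this diagonal continuity. Without property~(4), the rectangles $T_n$ containing points approaching $\Delta$ need not shrink in diameter; in particular the stretching factor $d_n-c_n$ appearing in $\phi|_{T_n^L}$ could remain bounded below, preventing $\phi(x_n,y_n)$ from approaching $(x,x)$. Property~(4) supplies the correct link between \emph{``$(x_n,y_n)$ close to $\Delta$''} and \emph{``$T_n$ close to $\Delta$''} by forcing $k_n\to\infty$ and hence $c_n,d_n\to x^+$, which is what makes the piecewise-affine construction continuous across the diagonal.
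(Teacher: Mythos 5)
Your construction is essentially the paper's own: identity on the diagonal, each Bennett--Burke--Lutzer rectangle $T$ split into halves mapped affinely onto $T$ and onto its reflection $T^S$, with property (4) supplying the shrinking of rectangles needed for continuity across $\Delta$ (the paper merely swaps which half goes to $T$ and which to $T^S$, and phrases the shrinking via ``$T_n$ lies below $L_k$'' rather than ``$k_n\to\infty$''). The argument is correct as written.
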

	
	\begin{proof}
		We shall define a homeomorphism $h:\Delta_2\rightarrow \mathbb{S}^2$. First, we will define the function $h$. After that we will prove that $h$ is 1-1 and onto. Finally, we will prove that $h$ and its inverse  are continuous.\paragraph{}
		For each $T=[a,b[\times [c,d[\in \T,$ we consider the following homeomomorphisms  $$h_{T^U,T^S}:T^U\to T^S \  {\rm and} \ h_{T^L,T}:T^L\to T$$ given by $h_{T^U,T^S}(x,y)= (2y-d,x )$ and $h_{T^L,T}(x,y)= (x,2y-c ),$ respectively. 
		Let $$h:=Id_\Delta\cup \bigcup\limits_{T\in \T}\left (h_{T^L,T}\cup h_{T^U,T^S} \right ), $$ where $Id_\Delta$ represents the identity function restricted to the diagonal.
		Since $\Delta_2=\Delta\sqcup \bigsqcup\limits_{T\in \T} (T^U\sqcup T^L)$ and $\mathbb{S}^2=\Delta\sqcup\bigsqcup\limits_{T\in \T} (T\sqcup T^S),$ it follows that  
		$h:\Delta_2\to \mathbb{S}^2$ is a bijection. Notice that $h\restriction (\Delta_2\setminus \Delta)$ and $h^{-1}\restriction (\mathbb{S}^2\setminus \Delta)$ are continuous,  as $h_{T^U,T^S}$ and  $h_{T^L,T}$ are homeomorphism between clopen subspaces. 
		
		We will now show that $h\restriction \Delta$ is continuous. Let $(x,x)\in\Delta$ and $ (x_n,y_n) (n\in\omega)$ be a sequence that converges to $(x,x)$. We may assume, without lose of generality, that $x_n>x,y_n>x$ for any $n\in\omega$. Separately, consider three subsequences, namely, those on the diagonal, those in the sets of the form $T^U$ and those in the sets of the form $T^L$. Let $A_0, A_1, A_2$ denote the indexes of the subsequences. If there are infinitely many points of the first type, then their images converges to $(x,x)$ since $h$ is the identity at such points. 
		
		If there are infinitely many points of the second type, then their images have the form $(2y_n-d_n,x_n ),$ where
		$T_n=[a_n,b_n[\times [c_n,d_n[$ is the element of $\T$ that contain $(x_n,y_n)$. Since $y_n$ converges to $x$ and $x<2y_n-d_n<y_n$, we have that  $\lim\limits_{n\in A_1}h(x_n,y_n)=\lim\limits_{n\in A_1}(2y_n-d_n,x_n)=(x,x)$.
		
		If there are infinitely many points of the third type, then their images have the form $ (x_n,2y_n-c_n ),$ where $T_n=[a_n,b_n[\times [c_n,d_n[$ is the element of $\T$ that contains $(x_n,y_n)$. Since $2y_n-c_n<d_n$, it is sufficient to prove that:
		
		\begin{claim}
			If $\lim\limits_{n\in A_2}(x_n,y_n)=(x,x)$, then $\lim\limits_{n\in A_2}(x_n,d_n)=(x,x)$.\\
		\end{claim}
		\begin{proof}
			Let $V=[x,x+\epsilon[^2\cap \Delta_2$ be a given open neighborhood of $(x,x)$.  Fix $k$ such that $\frac{1}{k}<\frac{\epsilon}{2}$. Since $(x_n,y_n)$ converges to $(x,x)$, we can find $N$ so that $(x_n,y_n)\in [x,x+\frac{\epsilon}{2}[^2$ and it is below the line $L_{k+2}$ for all $n\geq N$ . We are left  to show that $(x_n,d_n)\in V$ for all $n\geq N$. Let $n\geq N$ be given. Since $(x_n,y_n)$ is below the line $L_{k+2}$ and every rectangle $T\in \mathcal{T}$ is between two lines $L_{\ell} $ and $L_{\ell+2}$ for some $\ell.$ It follows that $T_n=[a_n,b_n[\times [c_n,d_n[$ is below $L_k$. Hence, $d_n-c_n<\frac{1}{k}<\frac{\epsilon}{2}$  and $x<y_n<x+\frac{\epsilon}{2}$. Therefore, $(x_n,d_n)\in V$ as required.
		\end{proof}
		
		We are left to show that the inverse $h^{-1}$ is continuous on the diagonal. Let $(x,x)\in\Delta$ be given and let $ (x_n,y_n) (n\in\omega)$ be a sequence that converges to $(x,x)$. We may assume, without lose of generality, that $x<x_n, y<y_n$ for any $n\in\omega$. Separately, consider three subsequences, namely, those on the diagonal, those elements above the diagonal and those in the sets below the diagonal. Let $A_0, A_1, A_2$ denote the indexes of the subsequences.
		If  there are infinitely many points of the first type, then their images converges to $(x,x)$ since $h^{-1}$ is the identity at such points.

		If there are infinitely many points of the second type, then their images have the form $(x_n,\frac{y_n+c_n}{2})$ where $T_n=[a_n,b_n[\times [c_n,d_n[$ are the elements of $\T$ that contains $(x_n,y_n)$. Since $y_n$ converges to $x$ and $x<\frac{y_n+c_n}{2}<y_n$, we have that $\lim\limits_{n\in A_1}(x_n,\frac{y_n+c_n}{2})=(x,x)$.  
		
		If there are infinitely many points of the third type, then their images have the form $(y_n,\frac{x_n+d_n}{2})$ where $T^S_n=[c_n,d_n[\times [a_n,b_n[$ are the elements of $\T$ such that $T_n^S$ contains $(x_n,y_n)$. Since $x<\frac{x_n+d_n}{2}
		<d_n$, it is sufficient to prove that:
		
		\begin{claim}
			If $\lim\limits_{n\in A_2}(x_n,y_n)=(x,x)$, then $\lim\limits_{n\in A_2}(y_n,d_n)=(x,x)$.\\
		\end{claim}
		\begin{proof}
			Let $V=[x,x+\epsilon[^2$ be a given open neighborhood of $(x,x)$. Let $\Tilde{L}_k$ denote the line from $(\frac{1}{k},0)$ to $(1,1)$ for $k\geq 1$.  Fix $k$ such that $\frac{1}{k}<\frac{\epsilon}{2}$. Since $(x_n,y_n)$ converges to $(x,x)$, we can find $N$ so that $(x_n,y_n)\in [x,x+\frac{\epsilon}{2}[^2$ and it is above the line $\Tilde{L}_{k+2}$ for all $n\geq N$ . We are left  to show that $(y_n,d_n)\in V$ for all $n\geq N$. Let $n\geq N$ be given. Since $(x_n,y_n)$ is above the line $\Tilde{L}_{k+2}$ and every rectangle $T\in \mathcal{T}^S$ is between two lines $\Tilde{L}_{\ell} $ and $\Tilde{L}_{\ell+2}$ for some $\ell.$ It follows that $T^S_n=[c_n,d_n[\times [a_n,b_n[$ is above $\Tilde{L}_k$. Hence, $d_n-c_n<\frac{1}{k}<\frac{\epsilon}{2}$  and $x<x_n<x+\frac{\epsilon}{2}$. Therefore, $(y_n,d_n)\in V$ as required.
		\end{proof}
	This concludes the proof of the Theorem.
	\end{proof}
	As a corollary we obtain.
	\begin{corollary}
		$\mathcal{F}_2(\mathbb{S})$ is homogeneous. 
	\end{corollary}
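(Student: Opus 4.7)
The plan is to deduce the corollary almost immediately from the preceding theorem. First, I would invoke the analog of Proposition \ref{ahom} for $\mathbb{S}$ in place of $\mathbb{A}$ (again a special case of \cite{g54}): the map $\tilde{\rho}:\Delta_2\to\mathcal{F}_2(\mathbb{S})$, $(x_0,x_1)\mapsto\{x_0,x_1\}$, is a homeomorphism. Note that for $m=2$ the equivalence relation $\sim$ is trivial, because each unordered pair $\{a,b\}$ has a unique non-decreasing representative in $\Delta_2$, so the quotient $\Delta_2/\sim$ is literally $\Delta_2$. Combined with the preceding theorem, which identifies $\Delta_2$ with $\mathbb{S}^2$, this reduces the corollary to showing that $\mathbb{S}^2$ is homogeneous.

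To establish the latter it is enough to show that $\mathbb{S}=[0,1[$ is homogeneous, since the product of a homogeneous space with itself is clearly homogeneous. I would exhibit the transitive group of ``rotations'' $r_a:x\mapsto(x+a)\bmod 1$ for $a\in[0,1[$. Verifying that each $r_a$ is a Sorgenfrey homeomorphism is routine: the preimage of a basic open set $[b,b+\epsilon[$ under $r_a$ is either another half-open interval of the same length, or, when wrapping across $0$ occurs, a disjoint union $[0,\delta[\cup[1-\eta,1[$ of two such intervals, both of which are Sorgenfrey-open; the symmetric argument handles $r_a^{-1}=r_{1-a}$. Since the rotations act transitively on $\mathbb{S}$, we conclude that $\mathbb{S}$ is homogeneous, hence so is $\mathbb{S}^2$.

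Chaining the two steps yields $\mathcal{F}_2(\mathbb{S})\cong\Delta_2\cong\mathbb{S}^2$, which is homogeneous. No real obstacle is anticipated at the corollary level: the substantive content lives entirely in the preceding theorem, and the only additional input is the elementary homogeneity of $\mathbb{S}$ itself.
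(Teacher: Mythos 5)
Your proposal is correct and follows essentially the same route as the paper: identify $\mathcal{F}_2(\mathbb{S})$ with $\Delta_2$ via the Ganea quotient (trivial for $m=2$), apply the preceding theorem to get $\Delta_2\cong\mathbb{S}^2$, and conclude from the homogeneity of $\mathbb{S}^2$. The only difference is that you spell out the homogeneity of $\mathbb{S}$ via rotations modulo $1$, which the paper simply asserts as known.
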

	
	\begin{proof}
		By Proposition \ref{rec}, $\mathcal{F}_2(\mathbb{S})$ is homeomorphic to $\Delta_2$. Since  $\mathbb{S}^2$ is homogeneous and $\Delta_2$ is homeomorphic to it, by the previous theorem, the result holds.
	\end{proof}

	
	

		
		
		
		


	\addcontentsline{toc}{section}{References}
	
	\bigskip
	\noindent Sebastián Barr\'ia\\
	Email: sebabarria@udec.cl\\
	
	\noindent Carlos A. Mart\'inez-Ranero\\
	Email: cmartinezr@udec.cl\\
	Homepage: www2.udec.cl/~cmartinezr\\
	
	\noindent Same Address:\\ 
	Universidad de Concepci\'on, Concepci\'on, Chile\\
	Facultad de Ciencias F\'isicas y Matem\'aticas\\
	Departamento de Matem\'atica\\


\end{document}